\newtheorem{thm}{Theorem}[section]
\newtheorem{cor}[thm]{Corollary}
\newtheorem{lem}[thm]{Lemma}
\newtheorem{prop}[thm]{Proposition}
\newtheorem{defin}[thm]{Definition}
\def\bC{{\mathbb C}}
\def\cM{{\mathcal M}}
\def\bQ{{\mathbb Q}}
\def\bR{{\mathbb R}}
\def\bT{{\mathbb T}}
\def\fT{{\mathfrak T}}
\def\bZ{{\mathbb Z}}
\def\spc{{\mathrm{spin^c}}}
\def\Spc{{\mathrm{Spin^c}}}
\def\del{{\partial}}
\def\rk{{\mathrm{rk}}}
\def\Spc{{\mathrm{Spin}^c}}
\def\wa{{\widetilde \alpha}}
\def\wb{{\widetilde \beta}}
\def\wg{{\widetilde \gamma}}
\begin{document}

\title[$(1,1)$ L-space knots]%
{$(1,1)$ L-space knots}

\author{Joshua Evan Greene}
\address{Department of Mathematics, Boston College\\ Chestnut Hill, MA 02467}
\email{joshua.greene@bc.edu}

\author{Sam Lewallen}
\address{Princeton Neuroscience Institute, Princeton University \\ Princeton, NJ 08544}
\email{lewallen@princeton.edu}

\author{Faramarz Vafaee}
\address{Mathematics Department, California Institute of Technology \\ Pasadena, CA 91125}
\email{vafaee@caltech.edu}

\maketitle

\medskip

\noindent {\bf Abstract.} 
We characterize the $(1,1)$ knots in the three-sphere and lens spaces that admit non-trivial L-space surgeries.
As a corollary, 1-bridge braids in these manifolds admit non-trivial L-space surgeries.
We also recover a characterization of the Berge manifold amongst 1-bridge braid exteriors.
\medskip


\section{Introduction.}

An \emph{L-space} is a rational homology sphere $Y$ with the ``simplest" Heegaard Floer invariant: $\widehat{HF}(Y)$ is a free abelian group of rank $|H_1(Y; \bZ)|$.
Examples abound and include lens spaces and, more generally, connected sums of manifolds with elliptic geometry \cite{Ath}.
One of the most prominent problems in relating Heegaard Floer homology to low-dimensional topology is to give a topological characterization of L-spaces.
Work by many researchers has synthesized a bold and intriguing proposal that seeks to do so in terms of taut foliations and orderability of the fundamental group \cite[Conjecture 5]{juhaszsurvey}.

A prominent source of L-spaces arises from surgeries along knots.
Suppose that $K$ is a knot in a closed three-manifold $Y$.
If $K$ admits a non-trivial surgery to an L-space, then $K$ is an {\em L-space knot}.
Examples include torus knots and, more generally, Berge knots in $S^3$ \cite{Berge}; two more constructions especially pertinent to our work appear in \cite{HomLidmanVafaee,Vafaee2013}.
If an L-space knot $K$ admits more than one L-space surgery -- for instance, if $Y$ itself is an L-space -- then it admits an interval of L-space surgery slopes, so it generates abundant examples of L-spaces \cite{Rasmussen2015floer}.
With the lack of a compelling guiding conjecture as to which knots are L-space knots, and as a probe of the L-space conjecture mentioned above, it is valuable to catalog which knots in various special families are L-space knots. This is the theme of the present work.

The manifolds in which we operate are the rational homology spheres that admit a genus one Heegaard splitting, namely the three-sphere and lens spaces.
The knots we consider are the {\em $(1,1)$ knots} in these spaces: these are the knots that can be isotoped to meet each Heegaard solid torus in a properly embedded, boundary-parallel arc. 
Our main result, Theorem \ref{thm:main} below, characterizes $(1,1)$ L-space knots in simple, diagrammatic terms.

A {\em $(1,1)$ diagram} is a doubly-pointed Heegaard diagram $(\Sigma, \alpha, \beta, z, w)$, where 
$(\Sigma, \alpha, \beta)$ is a genus one Heegaard diagram of a 3-manifold $Y$.
The $(1,1)$ knots in $Y$ are precisely those that admit a doubly-pointed Heegaard diagram  \cite{Goda2005, Hedden2011Berge,Rasmussen2005}.
A $(1,1)$ diagram is {\em reduced} if every bigon contains a basepoint.
We can transform a given $(1,1)$ diagram of $K$ into a reduced $(1,1)$ diagram of $K$ by isotoping the curves into minimal position in the complement of the basepoints: we accomplish this by successively isotoping away bigons in the complement of the basepoints and the curves in $\Sigma$. 
Our characterization of $(1,1)$ L-space knots in $S^3$ and lens spaces is expressed in terms of the following property of $(1,1)$ diagrams:

\begin{defin}
\label{defin:Coherent}
A reduced $(1,1)$ diagram $(\Sigma,\alpha,\beta,z,w)$ is {\bf coherent} if there exist orientations on $\alpha$ and $\beta$ that induce coherent orientations on the boundary of every embedded bigon $(D,\del D) \subset (\Sigma, \alpha \cup \beta)$.
Its sign, positive or negative, is the sign of $\alpha \cdot \beta$, with these curves coherently oriented.
\end{defin}

\noindent
Coherence is easy to spot in a diagram: see Figures \ref{fig:examples} and \ref{fig:pqrs} and the second paragraph of Subsection \ref{ss:11}.

We may now state the main result of the paper:

\begin{thm}
\label{thm:main}
A reduced $(1,1)$ diagram presents an L-space knot if and only if it is coherent.
The knot is a positive or negative L-space knot according to the sign of the coherent diagram.
\end{thm}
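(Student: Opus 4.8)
The plan is to translate the topology of the diagram into the algebra of the knot Floer complex. The first step is to recall that a reduced $(1,1)$ diagram $(\Sigma,\alpha,\beta,z,w)$ provides an explicit finite model for $CFK^\infty(Y,K)$: the generators are the points of $\alpha\cap\beta$, carrying their $\Spc$ structure and relative Maslov and Alexander gradings, and the differential counts embedded bigons $(D,\partial D)\subset(\Sigma,\alpha\cup\beta)$ recorded by the local multiplicities $(n_w,n_z)$. Because the diagram is reduced every bigon contains a basepoint, so---once one checks, as is standard for genus-one diagrams, that the relevant disks have embedded bigon domains---the associated graded differential vanishes and $\widehat{HFK}(Y,K)\cong\bZ\langle\alpha\cap\beta\rangle$, with both gradings read directly off the diagram. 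On the Floer-theoretic side I would invoke the characterization of L-space knots: $K$ is a positive (respectively negative) L-space knot exactly when, in every $\Spc$ structure $\s$ of $Y$, the complex $CFK^\infty(Y,K,\s)$ is filtered chain homotopy equivalent to a positive (respectively negative) staircase---equivalently, the generators lying in $\s$ can be listed with strictly decreasing Alexander grading and with Maslov gradings following the staircase pattern. For $Y=S^3$ this is due to Ozsv\'ath--Szab\'o; for lens spaces it follows by combining the large surgery and rational surgery formulas with the known structure of the interval of L-space surgery slopes.

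With both descriptions in place the theorem becomes a purely combinatorial assertion about when the bigons of a reduced $(1,1)$ diagram assemble into staircases. For the implication ``coherent $\Rightarrow$ L-space knot'' I would fix coherent orientations of $\alpha$ and $\beta$ and argue, first, that the two corners of every bigon then carry opposite local intersection signs (so a bigon contributes $0$ to $\alpha\cdot\beta$) and that every bigon drops exactly one of $n_w,n_z$---a bigon dropping both would join two generators lying in the same $\Spc$ structure and Alexander grading, which no staircase allows. Second, and this is the substantive point, I would show that within each $\Spc$ structure the bigons link the generators into a single zig-zag, with $z$-bigons and $w$-bigons alternating and the direction of ascent governed by the sign $\epsilon$ of $\alpha\cdot\beta$. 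Here I expect to organize $\alpha\cap\beta$ by its cyclic orders along $\alpha$ and along $\beta$, pass to a cover of $\Sigma$ in which the two curves unroll, and use coherence to rule out the ``branched'' local configurations---two incompatible bigons meeting a single intersection point---that would otherwise leave a generator with no bigon of the required type or force $\widehat{HFK}$ to have rank at least two in some Alexander grading. Once the zig-zag is in hand, reading off the $(n_w,n_z)$ and Maslov data exhibits the complex as a staircase of the sign prescribed by $\epsilon$, so $K$ is an L-space knot of the asserted sign.

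The converse I would prove contrapositively. If no orientations of $\alpha$ and $\beta$ make the diagram coherent, then for every choice there is a bigon with both corners of the same sign; gluing such a bigon to its neighbours along the curves should produce, in $\widehat{HFK}$, either two generators sharing an Alexander grading or a configuration of bigons (for instance a ``square'') incompatible with any staircase, in either case contradicting the L-space knot characterization. The sign statement then drops out by following $\epsilon$ through the resulting staircase.

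The principal obstacle is the combinatorial heart of the first implication: establishing that coherence is exactly the obstruction to branching, hence exactly the condition that the bigons of a reduced $(1,1)$ diagram form a staircase in each $\Spc$ structure. I also anticipate three subsidiary points requiring care: checking that immersed or higher-multiplicity domains never contribute in a reduced $(1,1)$ diagram, so that embedded bigons genuinely suffice; pinning down signs finely enough to separate positive from negative L-space knots; and verifying, in the lens space case, that a coherent diagram automatically carries an odd number of intersection points in each $\Spc$ structure, as the staircase description demands.
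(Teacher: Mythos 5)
Your skeleton matches the paper's: reduce to the positive/negative staircase (``positive chain'') characterization of L-space knots in L-spaces, observe that a reduced genus-one diagram computes $\widehat{HFK}$ with vanishing $d_0$ and with all differentials counted by embedded bigons, unroll $\alpha$ and $\beta$ in the universal cover $\bR^2$, and match the bigons in the two half-planes to the arrows of the staircase. The difficulty is that both load-bearing steps are left at the level of ``I would show'' / ``should produce,'' and the one place where you do commit to a mechanism, it is wrong. In the direction ``coherent $\Rightarrow$ L-space knot'' you exclude bigons containing both basepoints on the grounds that such a bigon ``would join two generators lying in the same $\Spc$ structure and Alexander grading''; but a bigon with $n_w,n_z>0$ satisfies $a(x)-a(y)=n_z(\phi)-n_w(\phi)$, which need not vanish, so this is not a contradiction. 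In fact such a bigon cannot be excluded by gradings at all: the correct argument is that it would contribute to neither $\widetilde{d}_v$ nor $\widetilde{d}_h$, breaking the zig-zag, so that $H_*(\widehat{HFK}(K,s),\widetilde{d}_v)$ would have rank greater than one, contradicting $\widehat{HF}(Y,i_v(s))\approx\bZ$. This essential use of the hypothesis that the ambient $Y$ is itself an L-space is absent from your proposal.

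The second gap is the converse direction, which you run contrapositively via ``gluing such a bigon to its neighbours should produce \dots a configuration of bigons incompatible with any staircase.'' The paper's actual mechanism is a direct grading computation: assuming $\widehat{HFK}(K,s)$ is a positive chain, the relations $a(x)-a(y)=n_z(\phi)-n_w(\phi)$ and $m(x)-m(y)=\mu(\phi)-2n_w(\phi)$, together with $n_w(\phi),n_z(\phi)\ge 0$ and the monotonicity of both gradings along the staircase, force any Maslov-index-one disk with nonzero count to run from $x_{2k}$ to $x_{2k\pm1}$ with the appropriate multiplicity vanishing (the paper's Lemma \ref{lem:nodifferentials}). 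From this one deduces that the upper-half-plane bigons realize $\widetilde{d}_v$, the lower ones realize $\widetilde{d}_h$, there are no others, and hence that $\wa$ and $\wb_s$ coherently orient every bigon they cobound; coherence of $D$ then follows by projecting. Your plan names the right obstruction (``branching'') but supplies no argument that a staircase forbids it, and the specific failure modes you predict (repeated Alexander gradings, a ``square'') are not the ones that actually arise. Until the grading lemma and the $\widehat{HF}(Y)\approx\bZ$ step are supplied, both implications remain unproved.
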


\noindent
The sign of an L-space knot is the sign of an L-space surgery slope along it, which we review in Subsection \ref{ss:HFK}.
Note that a given knot may admit non-homeomorphic coherent $(1,1)$ diagrams.
However, Theorem \ref{thm:main} implies that its reduced $(1,1)$ diagrams are either all incoherent or else all coherent and of the same sign. 
Again, see Figure \ref{fig:examples}.

We apply Theorem \ref{thm:main} to show that a broad family of $(1,1)$ knots are L-space knots.
We recall the following construction, first studied by Berge~\cite{Berge1991} and Gabai~\cite{Gabai1990}, and state a natural generalization of it.

\begin{defin}
\label{def:1-bridge1}
A knot in the solid torus $S^1 \times D^2$ is a {\bf 1-bridge braid} if it is isotopic to a union of two arcs $\gamma \cup \delta$ such that
\begin{itemize}
\item
$\gamma \subset \del (S^1 \times D^2)$ is {\em braided}, i.e., transverse to each meridian $\mathrm{pt.} \times \del D^2$, and
\item
$\delta$ is a {\em bridge}, i.e., properly embedded in some meridional disk $\mathrm{pt.} \times D^2$.
\end{itemize}
It is {\em positive} if $\gamma$ is a positive braid in the usual sense.
A knot in a closed three-manifold $Y$ with a genus one Heegaard splitting is a {\em 1-bridge braid} if it is isotopic to a 1-bridge braid supported within one of the Heegaard solid tori.
\end{defin}

\noindent
J. and S. Rasmussen conjectured that a positive 1-bridge braid in $S^3$ is a positive L-space knot at the end of \cite{Rasmussen2015floer}. We prove a generalization of their conjecture in Theorem \ref{thm:1-bridgerefinement}.  Without the sign refinement, the result reads:

\begin{thm}
\label{thm:1-Bridge}
1-bridge braids in $S^3$ and lens spaces are L-space knots.
\end{thm}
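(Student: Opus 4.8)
The plan is to deduce Theorem~\ref{thm:1-Bridge} from Theorem~\ref{thm:main} by producing, for each 1-bridge braid, a \emph{coherent} reduced $(1,1)$ diagram. First one checks that a 1-bridge braid $K=\gamma\cup\delta$, supported as in Definition~\ref{def:1-bridge1} in a Heegaard solid torus $V_\alpha$ of $Y$, is a $(1,1)$ knot: the bridge arc $\delta$ lies in a meridional disk $D$ of $V_\alpha$, which it cuts into two half-disks, so $\delta$ is boundary-parallel in $V_\alpha$; and $\gamma\subset\partial V_\alpha=\partial V_\beta$, so a collar push-off into $V_\beta$ makes $\gamma$ a boundary-parallel arc there. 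Since $\partial\delta=\partial\gamma$, after this isotopy $K$ meets each Heegaard solid torus in one boundary-parallel arc, hence is $(1,1)$ and admits a doubly-pointed genus one Heegaard diagram by \cite{Goda2005,Hedden2011Berge,Rasmussen2005}.

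The heart of the argument is to write this diagram explicitly from the braid data and read off coherence. Take $\alpha=\partial D$, the meridian of $V_\alpha$, and take $\beta$ to be the meridian of $V_\beta$, isotoped on $\Sigma\setminus\{z,w\}$ so that joining $z$ to $w$ in $\Sigma\setminus\alpha$ and pushing into $V_\alpha$ recovers $\delta$, while joining them in $\Sigma\setminus\beta$ and pushing into $V_\beta$ recovers $\gamma$. This yields a ``staircase'' diagram whose intersections $\alpha\cap\beta$ record the passes of the braided arc $\gamma$ through the meridional disk, together with a bounded number of intersections near the bridge; one checks it is already reduced, or becomes so after isotoping away any bigon missing both basepoints. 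Now orient $\alpha$ as the meridian and orient $\beta$ so that it runs along $\gamma$ consistently. The braided condition of Definition~\ref{def:1-bridge1} --- that $\gamma$ is transverse to every meridian with a single fixed sign --- says precisely that every intersection of $\alpha$ with the part of $\beta$ that tracks $\gamma$ has the same sign, so every embedded bigon between consecutive passes of $\gamma$ is coherently oriented by this pair of orientations; the bigons near the bridge are checked by hand. Thus the diagram is coherent, with sign equal to the sign of $\alpha\cdot\beta$, which is $+$ when $\gamma$ is a positive braid and $-$ otherwise. Theorem~\ref{thm:main} then gives Theorem~\ref{thm:1-Bridge}, and the sign computation yields the refinement Theorem~\ref{thm:1-bridgerefinement}.

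The main obstacle is the combinatorial bookkeeping in the second step: one must describe the $(1,1)$ diagram of an arbitrary 1-bridge braid precisely enough to enumerate \emph{all} of its embedded bigons after reduction and confirm that none --- neither the ones near the bridge nor any created while isotoping into reduced position --- obstructs the single global choice of orientations, and one must do this uniformly for lens spaces, where the $(p,q)$-curve $\beta$ already meets $\alpha$ in $p$ points and the staircase is correspondingly longer, though the sign analysis is unchanged. Everything else is a direct application of Theorem~\ref{thm:main}.
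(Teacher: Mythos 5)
Your overall strategy is the paper's: build an explicit reduced $(1,1)$ diagram of the 1-bridge braid by isotoping the meridian of the outer solid torus off of $\gamma$, verify that the diagram is coherent, and invoke Theorem \ref{thm:main}. But the step you yourself flag as ``the main obstacle'' --- enumerating the embedded bigons of the reduced diagram and checking that a single pair of orientations works for all of them --- is where the entire content of the theorem lives, and the argument you offer in its place does not establish it. Noting that the intersections of $\alpha$ with the strands of $\beta$ tracking $\gamma$ all have the same sign cannot by itself orient the bigons coherently: the two corners of \emph{any} embedded bigon have opposite intersection signs, so equality of signs along the braided part only shows that no bigon has both corners there. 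Every bigon that actually survives in the reduced diagram (the ``rainbow'' bigons over $w$ and under $z$ in the standard form of Figure \ref{fig:pqrs}) has a corner among the intersections you propose to ``check by hand,'' and it is precisely the mutual compatibility of all of these, in the sense of Definition \ref{defin:Coherent}, that must be proved; you also assert without argument that isotoping into reduced position creates no incoherent bigons. Finally, the sign you read off --- positive iff $\gamma$ is a positive braid --- is not correct in lens spaces: Proposition \ref{prop:coherent} and Theorem \ref{thm:1-bridgerefinement} show that the sign depends on the position of the slope of $\gamma$ (indeed of its slope interval) relative to the filling slope $p/q$, not on the braid sign alone; a simple knot, for instance, is both a positive and a negative L-space knot.

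The paper closes exactly this gap by passing to the universal cover. With $\gamma$ straightened to a geodesic, its relevant lifts are parallel segments, each meeting $\wa$ and a fixed lift $\wb_0$ of the meridian of $V$ in that order, and all meeting the same component of $\wa - x_1$; the finger move along $\gamma$ therefore produces intersection points $x_1,\dots,x_{2n+1}$ occurring in the same order along both $\wa$ and $\wb_1$, i.e.\ $\wb_1$ is \emph{graphic} in the sense of Definition \ref{def:graphic}, and each bigon elimination removes a pair of points consecutive along both curves, so $\wb_2$ remains graphic. Coherence, with the correct $p/q$-dependent sign, then follows from this single global ordering statement, which is what replaces the case-by-case bookkeeping you defer. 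To complete a proof along your lines you would need some substitute for that ordering argument; the local sign count does not supply one.
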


\noindent
Krcatovich has found many examples of $(1,1)$ L-space knots in $S^3$ that are not 1-bridge braids by a computer search \cite{Krcatovich2016private}.
A small representative is the knot $K(21,4,4,11)$ in the notation of \cite{Rasmussen2005}.
He showed that it is an L-space knot by an application of Theorem \ref{thm:main}, and that its Alexander polynomial distinguishes it from 1-bridge braids by comparing with a list tabulated by J. Rasmussen.

We collect the necessary background on Heegaard Floer homology and prove Theorem \ref{thm:main} in Section \ref{sec:Background}.
We prove Theorem \ref{thm:1-Bridge} and its sign-refined version, Theorem \ref{thm:1-bridgerefinement}, in Section \ref{sec:1-bridge}.
We also use Theorem \ref{thm:1-bridgerefinement} to characterize the Berge manifold in Proposition \ref{prop:unique}.

We leave open two natural problems: the isotopy classification of $(1,1)$ L-space knots, and the determination of whether surgeries along these knots conform to \cite[Conjecture 5]{juhaszsurvey}.


\section*{Acknowledgments.}
We thank Matt Hedden, Jen Hom, David Krcatovich, Adam Levine, Clayton McDonald, Yi Ni, and Alex Zupan for helpful conversations.
We thank the referee for a very thorough and thoughtful review.
Sam Lewallen would like to thank Zolt\'an Szab\'o for suggesting he study $(1,1)$ L-space knots, and both Zolt\'an and Liam Watson for their interest and encouragement. 
JEG was supported by NSF CAREER Award DMS-1455132 and an Alfred P. Sloan Foundation Research Fellowship.
SL was supported by an NSF Graduate Research Fellowship.  


\begin{figure}[t]
\includegraphics[width=4in]{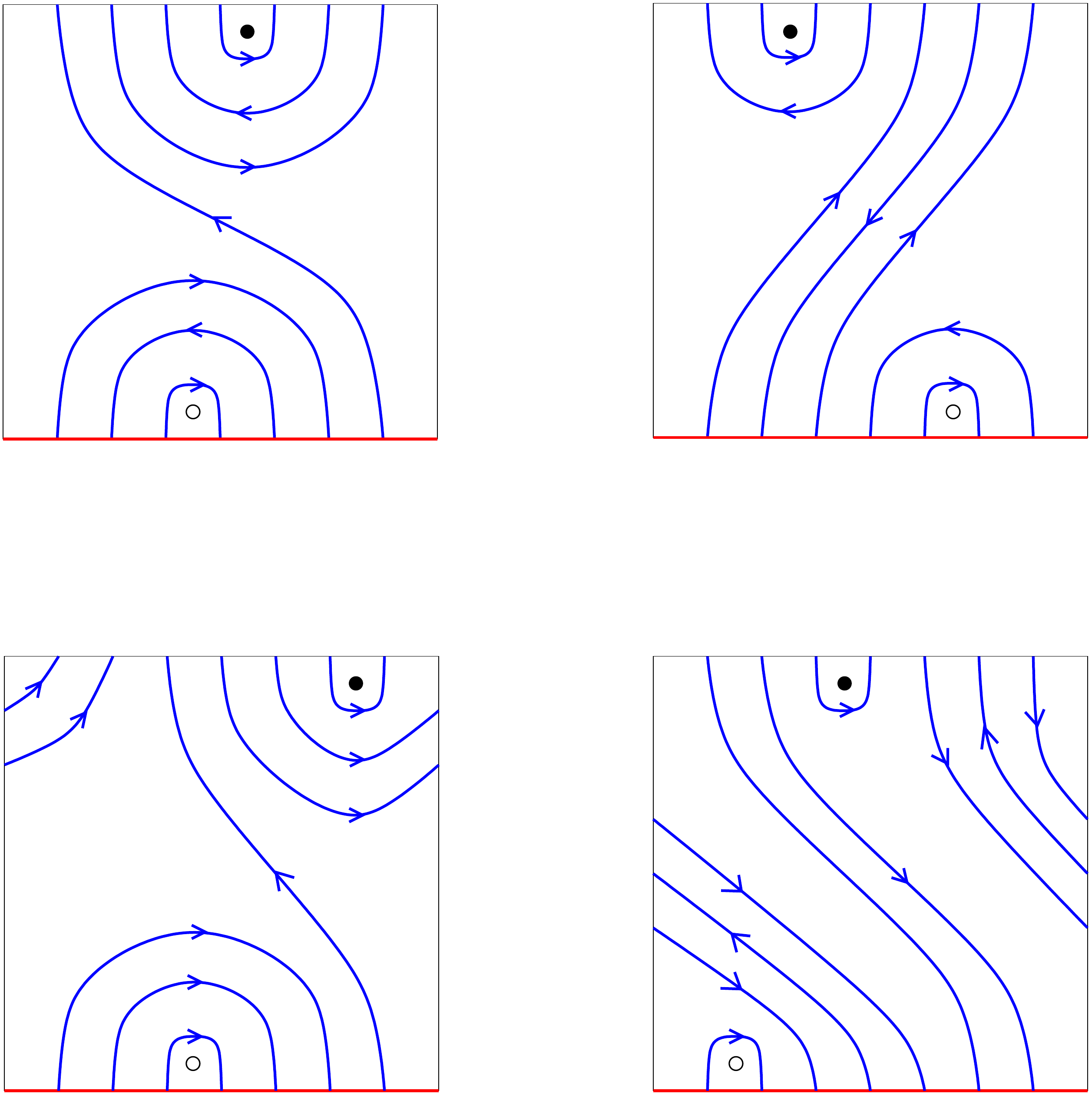}
\put(-255,180){$w$}
\put(-242,280){$z$}
\put(-300,175){\color{red} $\alpha$}
\put(-260,220){\color{blue} $\beta$}
\caption{At top, two reduced diagrams of the twist knot $5_2$, and at bottom, two reduced diagrams of the torus knot $T(2,7)$.  The top two are incoherent: with the $\beta$ curve oriented, the ``rainbow" arcs over the $w$ basepoint do not all orient the same way.  The bottom two are positive coherent, as all of the rainbow arcs do orient the same way, and $\alpha \cdot \beta > 0$.}
\label{fig:examples}
\end{figure}

\section{Proof of the characterization.}
\label{sec:Background}

We assume familiarity with (knot) Floer homology and review the essential input for our work in Subsections \ref{ss:HFK} and \ref{ss:diagrams}.
In particular, we follow the treatment of \cite[\S 2.2]{Rasmussen2015floer}, with slight differences in notation.
We prove Theorem \ref{thm:main} in Section \ref{ss:11}.


\subsection{Knot Floer homology and L-space knots.}
\label{ss:HFK}
Let $K$ denote a (doubly pointed) knot in a rational homology sphere $Y$.
Let $N(K)$ denote an open tubular neighborhood of $K$, $X = Y \smallsetminus N(K)$ the knot exterior, and $\mu \subset \del X$ a meridian of $K$.
Let $\Spc(Y)$ denote the set of $\spc$ structures on $Y$ and $\Spc(X,\del X)$ the set of relative $\spc$ structures on $(X,\del X)$.
They are torsors over the groups $H_1(Y;\bZ)$ and $H_1(X;\bZ)$, respectively.
Let $\Spc(K)$ denote the set of orbits in $\Spc(X,\del X)$ under the action by $[\mu]$.
It forms a torsor over $H_1(X;\bZ) / [\mu] \approx H_1(Y;\bZ)$, and there exists a pair of torsor isomorphisms $i_v, i_h : \Spc(K) \to \Spc(Y)$.

We work with the hat-version of knot Floer homology with $\bZ$ coefficients, graded by $s \in \Spc(K)$:
\[
\widehat{HFK}(K) = \bigoplus_{s} \widehat{HFK}(K,s).
\]
There exists a further pair of gradings
\[
a, m : \widehat{HFK}(K,s) \to \bZ
\]
on each summand, the Alexander and Maslov gradings.
An element is homogeneous if it is homogeneous with respect to both gradings.
The group $\widehat{HFK}(K)$ comes equipped with differentials $\widetilde{d}_v$, $\widetilde{d}_h$ that preserve the $\Spc(K)$-grading, respectively lower $m$ and $m-2a$ by one, and respectively raise and lower $a$.
They are invariants of $K$, and their homology calculates $\widehat{HF}(Y,i_v(s))$ and  $\widehat{HF}(Y,i_h(s))$, respectively.
The manifold $Y$ is an L-space if $\widehat{HF}(Y,t) \approx \bZ$ for all $t \in \Spc(Y)$.

\begin{defin}
\label{def:positivechain}
For a knot $K$ in an L-space $Y$ and $s \in \Spc(K)$, the group $\widehat{HFK}(K,s)$ is a {\bf positive chain} if it admits a homogeneous basis $x_1,\dots,x_{2n+1}$ such that, for all $k$,
\[
\widetilde{d}_v(x_{2k}) = \pm x_{2k-1}, \quad \widetilde{d}_h(x_{2k}) = \pm x_{2k+1}, \quad \widetilde{d}_v(x_{2k-1}) = \widetilde{d}_h(x_{2k-1}) = 0.
\]
The group $\widehat{HFK}(K)$ {\em consists of positive chains} if $\widehat{HFK}(K,s)$ is a positive chain for all $s$.
\end{defin}

\noindent
For example, a positive chain with $2n+1 = 7$ generators takes the form shown here:

\[
\xymatrix{
& \langle x_2 \rangle \ar[dl]_{\widetilde{d}_v} \ar[dr]^{\widetilde{d}_h} & & \langle x_4 \rangle\ar[dl]_{\widetilde{d}_v} \ar[dr]^{\widetilde{d}_h} & & \langle x_6 \rangle \ar[dl]_{\widetilde{d}_v} \ar[dr]^{\widetilde{d}_h} & \\
\langle x_1 \rangle & & \langle x_3 \rangle & &\langle x_5 \rangle & & \langle x_7 \rangle
}
\]
\noindent
Each arrow represents a component of the differential and is an isomorphism between the groups it connects.
Similarly, a {\em negative chain} is the dual complex to a positive chain with respect to the defining basis.
Reversing the arrows above gives an example of a negative chain.

Note that the requirement that the positive chain basis elements are homogeneous does not appear in \cite[Definition 3.1]{Rasmussen2015floer}.
For example, we could alter the positive chain basis displayed above to an inhomogeneous one by replacing $x_2$ by $x_2+x_1$.
However, homogeneity is an intended property of a positive chain basis in the literature.
Moreover, an inhomogeneous positive chain basis gives rise to a homogeneous one by replacing each basis element by its homogeneous part of highest bigrading.
Therefore, our definition is no more restrictive, and its precision is more convenient when we invoke it in the proofs of Lemmas \ref{lem:nodifferentials} and \ref{lem:multiplicity}.

Let $\lambda \subset \del X$ denote the rational longitude of $K$, the unique slope that is rationally null-homologous in $X$.
Note that $\mu \ne \lambda$, since $Y$ is a rational homology sphere.
Another slope $\alpha \subset \del X$ is {\em positive} or {\em negative} according to the sign of $(\mu \cdot \lambda)(\lambda \cdot \alpha)(\alpha \cdot \mu)$, for any orientations on these curves.
The knot $K$ is a positive or a negative L-space knot if it has an L-space surgery slope of that sign.
The following theorem characterizes positive L-space knots in L-spaces in terms of their knot Floer homology.
Ozsv\'ath and Szab\'o originally proved it for the case of knots in $S^3$ \cite[Theorem~1.2]{Ath}.
Boileau, Boyer, Cebanu, and Walsh promoted a significant component of their result to knots in rational homology spheres \cite{Boileauetal2012}.
Building on it, J. and S. Rasmussen established the definitive form of the result that we record here: see \cite[Lemmas~3.2, 3.3, 3.5]{Rasmussen2015floer}, including the proofs of these results.

\begin{thm}
\label{thm:PositiveLspace}
A knot $K$ in an L-space $Y$ is a positive L-space knot if and only if $\widehat{HFK}(K)$ consists of positive chains. \qed
\end{thm}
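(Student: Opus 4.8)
The plan is to derive the theorem from the Heegaard Floer surgery formula, following Ozsv\'ath and Szab\'o's treatment for knots in $S^3$ \cite{Ath} and its promotion to rational homology spheres \cite{Boileauetal2012, Rasmussen2015floer}. The governing object is the mapping cone complex $\mathbb{X}(\alpha)$ computing $\widehat{HF}(Y_\alpha(K))$ for a surgery slope $\alpha$. Its inputs are the subquotient complexes $\hat{A}_s$, indexed by $s \in \Spc(K)$, whose homologies compute $\widehat{HF}$ of all sufficiently positive surgeries, together with copies of $\widehat{CF}(Y)$ and the vertical and horizontal projections $\hat{v}_s, \hat{h}_s$ induced by $\widetilde{d}_v$ and $\widetilde{d}_h$. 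Because $Y$ is an L-space, each copy of $\widehat{CF}(Y)$ has homology of the minimal rank permitted by $|H_1(Y;\bZ)|$, so the whole question collapses to controlling the homologies $H_*(\hat{A}_s)$ and the ranks of the connecting maps $\hat{v}_s$ and $\hat{h}_s$.

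For the reverse implication, suppose $\widehat{HFK}(K)$ consists of positive chains. On each summand the positive-chain basis of Definition~\ref{def:positivechain} makes the $\widetilde{d}_v$- and $\widetilde{d}_h$-homologies one-dimensional, matching the L-space condition on $Y$, and, crucially, forces the induced maps $\hat{v}_s, \hat{h}_s$ to vary monotonically in $s$: there is a threshold below which $\hat{h}_s$ is an isomorphism and above which $\hat{v}_s$ is, with the transition length dictated by the staircase. I would feed this monotonicity into $\mathbb{X}(\alpha)$ and verify that, for a positive slope $\alpha$, the cone has homology of rank exactly $|H_1(Y_\alpha(K);\bZ)|$; thus $Y_\alpha(K)$ is an L-space. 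The orientation of the staircase steps --- the ``positive'' in positive chain --- is precisely what places the L-space slope on the positive side of the sign convention recorded above.

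For the forward implication, suppose $K$ is a positive L-space knot, so that $Y_\alpha(K)$ is an L-space for some positive $\alpha$. Minimality of the rank of $H_*(\mathbb{X}(\alpha))$ leaves no room for excess cancellation in the cone, and this forces each connecting map to be surjective or injective as its position demands. Unwinding the consequences, one obtains $H_*(\hat{A}_s) \cong \bZ$ for every $s$, with the two projections to $\widehat{HF}(Y)$ as tightly constrained as possible. A bookkeeping argument --- matching the graded Euler characteristics (the Turaev torsion of $K$) against the one-dimensionality of each $H_*(\hat{A}_s)$ and the monotone pattern of the $\hat{v}_s, \hat{h}_s$ --- then produces a homogeneous basis of each $\widehat{HFK}(K,s)$ in which $\widetilde{d}_v$ and $\widetilde{d}_h$ assume the positive-chain form.

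The main obstacle is this last extraction in the forward direction: passing from the soft, homology-level information (every $H_*(\hat{A}_s)$ has rank one and the connecting maps are monotone) to the rigid, chain-level conclusion that each summand is literally a positive chain. The maps $\hat{v}_s, \hat{h}_s$ record only homological data, whereas the positive-chain structure asserts the existence of a specific homogeneous basis; bridging this gap requires tracking the Alexander and Maslov gradings and eliminating, grading by grading, every configuration other than a staircase. This graded bookkeeping is precisely the content of \cite[Lemmas~3.2, 3.3, 3.5]{Rasmussen2015floer}, whose detailed arguments complete the proof.
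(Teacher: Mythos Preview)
The paper does not actually prove this theorem: the statement carries a \qed with no proof environment, and the preceding paragraph attributes it to the literature, pointing the reader to \cite[Lemmas~3.2, 3.3, 3.5]{Rasmussen2015floer} (building on \cite{Ath} and \cite{Boileauetal2012}). Your proposal ultimately lands in exactly the same place, invoking those same three lemmas for the substantive work; so at the level of ``what is the paper's proof,'' you and the paper agree.

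What you have added is an informal narrative of how those lemmas go --- the mapping-cone/large-surgery formula, the rank constraint forcing $H_*(\hat{A}_s)\cong\bZ$, and the graded bookkeeping that upgrades this to a staircase. That narrative is broadly accurate as a sketch of the Rasmussen--Rasmussen argument, and you are right to flag the forward direction (homology-level minimality $\Rightarrow$ chain-level staircase) as the nontrivial step. Since the paper itself treats the theorem as a black box, your expanded outline is compatible with, and more informative than, what the paper provides; just be aware that the details you gesture at (monotonicity of $\hat v_s,\hat h_s$, the Turaev-torsion matching) are not proved here but deferred to the cited lemmas, exactly as the paper does.
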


\noindent
Similarly, $K$ is a negative L-space knot if and only if $\widehat{HFK}(K)$ consists of negative chains.
The reason amounts to the behavior of Dehn surgery and knot Floer homology under mirroring.


\subsection{Calculating the invariants from a Heegaard diagram.}
\label{ss:diagrams}

The invariants can be calculated from any doubly-pointed Heegaard diagram $D = (\Sigma,\alpha,\beta,z,w)$ of $K$ after making some additional analytic choices.
Here, as usual, $\Sigma$ denotes a closed, oriented surface of some genus $g$;  $\alpha$ and $\beta$ are $g$-tuples of homologically linearly independent, disjoint, simple closed curves in $\Sigma$; and the two basepoints $w$ and $z$ lie in the complement of the $\alpha$ and $\beta$ curves on $\Sigma$.
The curve collections induce tori $\bT_\alpha, \bT_\beta$ in the $g$-fold symmetric product $\mathrm{Sym}^g(\Sigma)$.
The underlying group of the Floer chain complex $\widehat{CFK}(D)$ is freely generated by $\fT(D) = \bT_\alpha \cap \bT_\beta$.
The elements of $\fT(D)$ fall into equivalence classes in 1-1 correspondence with $\Spc(K)$: two elements $x, y$ lie in the same equivalence class iff the set $\pi_2(x, y)$ of homotopy classes of Whitney disks from $x$ to $y$ is non-empty.
Write $\fT(D,s)$ for the equivalence class corresponding to $s \in \Spc(K)$ and $\widehat{CFK}(D,s)$ for the subgroup of $\widehat{CFK}(D)$ generated by the elements in $\fT(D,s)$.
Each Whitney disk $\phi \in \pi_2(x,y)$ has a pair of multiplicities $n_z(\phi)$, $n_w(\phi)$ and a Maslov index $\mu(\phi)$.
The Alexander and Maslov gradings on $\widehat{CFK}(D,s)$ are characterized up to an overall shift by the relations
\[
a(x) - a(y) = n_z(\phi)-n_w(\phi), \quad m(x) - m(y) = \mu(\phi) - 2 n_w(\phi)
\]
for all $x, y \in \fT(D,s)$ and $\phi \in \pi_2(x,y)$.
There exist endomorphisms $d_0, d_v, d_h$ of $\widehat{CFK}(D,s)$ defined on generators $x \in \fT(D,s)$ by the same general prescription:
\[
d(x) =\sum_{ y , \phi } \# \widehat{\mathcal M}(\phi) \cdot y.
\]
Here $y$ ranges over $\fT(D,s)$; $\phi$ ranges over the elements of $\pi_2(x,y)$ with Maslov index $\mu(\phi)=1$ and a constraint on the multiplicities $n_w(\phi)$, $n_z(\phi)$; and $\# \widehat{\mathcal M}(\phi)$ is the count of pseudo-holomorphic representatives of $\phi$.
The specific constraints on the multiplicities are
$n_w(\phi) = n_z(\phi)=0$ for $d=d_0$;
$n_w(\phi)=0, n_z(\phi) > 0$ for $d=d_v$;
and $n_w(\phi) > 0, n_z(\phi)=0$ for $d=d_h$.
The maps $d_0, d_0+d_v,d_0+d_h$ are all differentials.
The differential $d_0 + d_v$ lowers $m$ by one and is filtered with respect to $a$.
The differential $d_0 + d_h$ lowers $m-2a$ by one and is filtered with respect to $-a$.
The differential $d_0$ is the $a$-filtration-preserving component of each.
The groups $\widehat{HFK}(K,s)$, $\widehat{HF}(Y,i_v(s))$, and $\widehat{HF}(Y,i_h(s))$ are the homology groups of $\widehat{CFK}(D,s)$ with respect to $d_0$, $d_0 + d_v$, and $d_0 + d_h$, respectively, and the Alexander and Maslov gradings on $\widehat{CFK}(D,s)$ descend to the respective gradings on $\widehat{HFK}(K,s)$.
The maps $d_0 + d_v$ and $d_0 + d_h$ induce the differentials $\widetilde{d}_v$ and $\widetilde{d}_h$ on $\widehat{HFK}(K,s)$, respectively.

The Alexander and Maslov gradings enable us to constrain the existence of pseudo-holomorphic disks for L-space knots.

\begin{lem}
\label{lem:nodifferentials}
Suppose that $D$ is a doubly-pointed Heegaard diagram for $K$, $d_0$ vanishes on $\widehat{CFK}(D,s)$, and $\widehat{HFK}(K,s)$ is a positive chain with basis $x_1,\dots,x_{2n+1}$.
If there exist generators $x,y \in \fT(D,s)$ and a disk $\phi \in \pi_2(x,y)$ with $\mu(\phi)=1$ and $\# \cM(\phi) \ne 0$, then $x = x_{2k}$ and $y = x_{2k\pm1}$ for some $k$.
Furthermore, $n_z(\phi) = 0$ if $j = 2k+1$ and $n_w(\phi) = 0$ if $j = 2k-1$.
\end{lem}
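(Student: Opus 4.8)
The plan is to first identify the generators of the diagram with the positive-chain basis, and then reduce the statement to a monotonicity inequality on the Alexander and Maslov gradings of a positive chain. First I would exploit the hypothesis $d_0 = 0$: since the homology of $\widehat{CFK}(D,s)$ with respect to $d_0$ is then $\widehat{CFK}(D,s)$ itself, we get $\widehat{HFK}(K,s) = \widehat{CFK}(D,s)$ as bigraded groups, the set $\fT(D,s)$ is a homogeneous basis of the positive chain $\widehat{HFK}(K,s)$, and the maps $d_v, d_h$ on $\widehat{CFK}(D,s)$ literally coincide with the induced differentials $\widetilde{d}_v, \widetilde{d}_h$ on $\widehat{HFK}(K,s)$. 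Inspecting Definition \ref{def:positivechain}, the Alexander gradings $a(x_1), \dots, a(x_{2n+1})$ are strictly monotone (increasing, say), and as they lie in a single coset of $\bZ$ their successive differences are at least $1$; hence each Alexander-graded summand of $\widehat{HFK}(K,s)$ is $0$ or infinite cyclic, so comparing the two homogeneous bases $\fT(D,s)$ and $\{x_1, \dots, x_{2n+1}\}$ of this group identifies them up to sign and relabeling. I would then just write $\fT(D,s) = \{x_1, \dots, x_{2n+1}\}$.

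Next I would record the grading data of a positive chain. Put $c_l := 2a(x_l) - m(x_l)$. From Definition \ref{def:positivechain}, together with the facts that $\widetilde{d}_v$ drops $m$ by one and $\widetilde{d}_h$ drops $m - 2a$ by one, one gets $m(x_{2k}) - m(x_{2k-1}) = 1$ and $c_{2k+1} - c_{2k} = 1$; combined with the monotonicity above, a short check shows that both sequences $m(x_l)$ and $c_l$ are strictly increasing in $l$, with all successive differences at least $1$. Now suppose $\phi \in \pi_2(x_i, x_j)$ has $\mu(\phi) = 1$ and $\#\cM(\phi) \ne 0$. The domain of a holomorphic representative of $\phi$ has non-negative multiplicities, so in particular $n_z(\phi), n_w(\phi) \ge 0$, and the grading relations
\[
m(x_i) - m(x_j) = 1 - 2n_w(\phi), \qquad a(x_i) - a(x_j) = n_z(\phi) - n_w(\phi)
\]
give $n_w(\phi) = \tfrac12\bigl(1 - m(x_i) + m(x_j)\bigr)$ and $n_z(\phi) = \tfrac12\bigl(1 + c_i - c_j\bigr)$. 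Integrality of $n_w(\phi)$ forces $i \ne j$. If $i > j$, then $m(x_i) - m(x_j) \ge i - j \ge 1$, so $n_w(\phi) \ge 0$ forces $j = i - 1$, whence $n_w(\phi) = 0$ and $n_z(\phi) = a(x_i) - a(x_{i-1}) > 0$; symmetrically, if $i < j$, then $c_j - c_i \ge j - i$ forces $j = i + 1$, $n_z(\phi) = 0$, and $n_w(\phi) = a(x_{i+1}) - a(x_i) > 0$. Either way $\{i, j\} = \{k, k+1\}$ for some $k$, and exactly one of $n_z(\phi), n_w(\phi)$ vanishes.

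Finally I would fix the parity of $i$. Say $j = i - 1$ (the case $j = i + 1$ is the same with $d_h, \widetilde{d}_h$ replacing $d_v, \widetilde{d}_v$). Since $Y$ is a rational homology sphere, $\pi_2(x_i, x_{i-1})$ contains at most one class of Maslov index one, so $\phi$ is the only disk contributing to $d_v(x_i)$ with target $x_{i-1}$ -- and since $n_w(\phi) = 0 < n_z(\phi)$ this is a genuine contribution -- whence the $x_{i-1}$-coefficient of $d_v(x_i)$ equals $\pm\#\cM(\phi) \ne 0$. But $d_v = \widetilde{d}_v$, which by Definition \ref{def:positivechain} annihilates every odd-indexed $x_l$; therefore $i$ is even, $i = 2k$, and then $\widetilde{d}_v(x_{2k}) = \pm x_{2k-1}$ is consistent with $j = 2k - 1$. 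This gives $x = x_{2k}$, $y = x_{2k-1}$, and $n_w(\phi) = 0$, as claimed.

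I expect the main obstacle to be psychological rather than technical: the natural worry is that one must separately rule out ``wide'' disks with $n_z(\phi) > 0$ and $n_w(\phi) > 0$, but the argument in the second paragraph shows that the non-negativity of $n_z(\phi)$ and $n_w(\phi)$, set against the strict monotonicity of $m(x_l)$ and $2a(x_l) - m(x_l)$ along the positive chain, already forces $i$ and $j$ to be adjacent. The only ingredient beyond grading arithmetic is that $d_0 = 0$ makes $d_v$ agree with the positive-chain differential $\widetilde{d}_v$, which is what pins the parity of $i$ (and, symmetrically, $d_h = \widetilde{d}_h$ in the other case). A minor point to be careful about is the sign convention for how $\widetilde{d}_v$ and $\widetilde{d}_h$ move the Alexander grading, which is what makes the two sequences $m(x_l)$ and $2a(x_l)-m(x_l)$ monotone.
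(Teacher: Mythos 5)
Your proof is correct and follows essentially the same route as the paper's: identify $\fT(D,s)$ with the positive-chain basis via homogeneity, play the non-negativity of $n_w(\phi)$ and $n_z(\phi)$ against the grading structure of the chain to force $j = i \pm 1$ with the right multiplicity vanishing, and then pin down the parity of $i$ using the uniqueness of the Maslov-index-one class in a rational homology sphere together with $\widetilde{d}_v(x_{2k-1}) = \widetilde{d}_h(x_{2k-1}) = 0$. The only difference is bookkeeping: where the paper telescopes the sums of the increments $b_k$, $c_k$ explicitly, you package the same information into the two monotone sequences $m(x_l)$ and $2a(x_l) - m(x_l)$, which is a slightly cleaner way to run the identical estimate.
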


\begin{proof}
Definition \ref{def:positivechain} and the paragraph preceding it show that the positive chain basis satisfies
\[
a(x_{2k})-a(x_{2k-1}) = b_k, \quad m(x_{2k}) - m(x_{2k-1}) = 1
\]
and
\[
a(x_{2k})-a(x_{2k+1}) = -c_k, \quad m(x_{2k}) - m(x_{2k+1}) = 1 - 2c_k,
\]
for some positive integers $b_k, c_k$, $k=1,\dots,n$.
In particular, the Maslov and Alexander gradings of the $x_l$ both decrease with the index $l$.
The generators $x,y \in \fT(D,s)$ are homogeneous with respect to both gradings, as are the positive chain basis elements, so it follows that $x = \pm x_i$ and $y = \pm x_j$ for some $i$, $j$, and choices of sign.

The assumption that $\# \cM(\phi) \ne 0$ implies that $n_w(\phi), n_z(\phi) \ge 0$, and at least one inequality is strict, since $d_0 = 0$.
Thus,
\begin{equation}
\label{eq:gradings1}
a(x_i) - a(x_j) = n_z(\phi) - n_w(\phi) \quad \textup{and} \quad m(x_i) - m(x_j) = 1 -2n_w(\phi) \le 1.
\end{equation}
In the case of equality $m(x_i)-m(x_j)=1$, then $j = i-1$, and either $i = 2k$, which gives the desired conclusion, or else $j = 2k$, which we must rule out.
In this case, $n_w(\phi) = 0$ and $n_z(\phi) > 0$.
Since $Y$ is a rational homology sphere, it follows that $\phi \in \pi_2(x,y)$ is the unique disk with $\mu(\phi)=1$, so the coefficient on $y$ in $\widetilde{d}_v(x)$ is $\# \cM(\phi) \ne 0$.
However, this implies that $\widetilde{d}_v(x_{2k+1}) \ne 0$, which violates the form of the positive chain.

Otherwise, $m(x_i)-m(x_j) \le -1$, so $i < j$.
We have
\begin{equation}
\label{eq:gradings2}
a(x_i) - a(x_j) = \sum_{l=i}^{j-1} a(x_l) - a(x_{l+1}), \quad m(x_i) - m(x_j) = \sum_{l=i}^{j-1} m(x_l) - m(x_{l+1}).
\end{equation}
Each term in the second sum is odd, and their total sum is odd, so it contains an odd number of terms.
The terms of the first sum alternately take the form $-b_k$ and $-c_k$, while the terms of the second sum alternately take the form $-1$ and $1-2c_k$.
Let $b$ denote the sum of the values $b_k$ that appear and $c$ the sum of the $c_k$ that appear.
All of values $b_k$ are positive, so $b \ge 0$, and $b =0$ only if $i$ is even and $j=i+1$, and $b=1$ only if $i$ is odd and $j=i+1$.
The first sum in \eqref{eq:gradings2} equals $-b-c$, while the second sum equals $1-2c$ or $-1-2c$, according to whether $i$ is even or odd.
Comparing with the second sum in \eqref{eq:gradings1} gives $n_w(\phi) = c$ or $c+1$, according to whether $i$ is even or odd.
Comparing with the first sum in \eqref{eq:gradings1} then gives $n_z(\phi) = -b$ or $-b+1$, according to whether $i$ is even or odd.
If $i$ is even, then $0 \le n_z(\phi) = -b$ gives $n_z(\phi) = 0$ and $j=i+1$, which gives the desired conclusion.
If $i$ is odd, then $0 \le n_z(\phi) = -b+1$ again gives $n_z(\phi)=0$ and $j=i+1$.
However, in this case it follows as before that $y$ appears with coefficient $\# \cM(\phi) \ne 0$ in $\widetilde{d}_h(x)$, which once again violates the form of the positive chain.
\end{proof}


\subsection{(1,1) knots.}
\label{ss:11}

Now assume that  $D$ is a $(1,1)$-diagram, so $\Sigma$ has genus one.
In this case, the differentials on $\widehat{CFK}(D)$ admit an explicit description that requires no analytic input, owing to the Riemann mapping theorem and the correspondence between holomorphic disks in the Riemann surface $\Sigma = \mathrm{Sym}^1(\Sigma)$ and its universal cover $\bC$.
Specifically, the conditions that $\phi \in \pi_2(x,y)$ and $\mu(\phi)=1$ imply that $\# \widehat{\mathcal M}(\phi) = \pm 1$ for any choice of analytic data.
Furthermore, these conditions are met if and only if the image of $\phi$ lifts under the universal covering map $\pi : \bR^2 \to \Sigma$
to a bigon cobounded by lifts of $\alpha$ and $\beta$.
See \cite{Goda2005} and \cite[pp.89-96]{Ozsvath2004} for more details.

\begin{figure}
\includegraphics[width=2.5in]{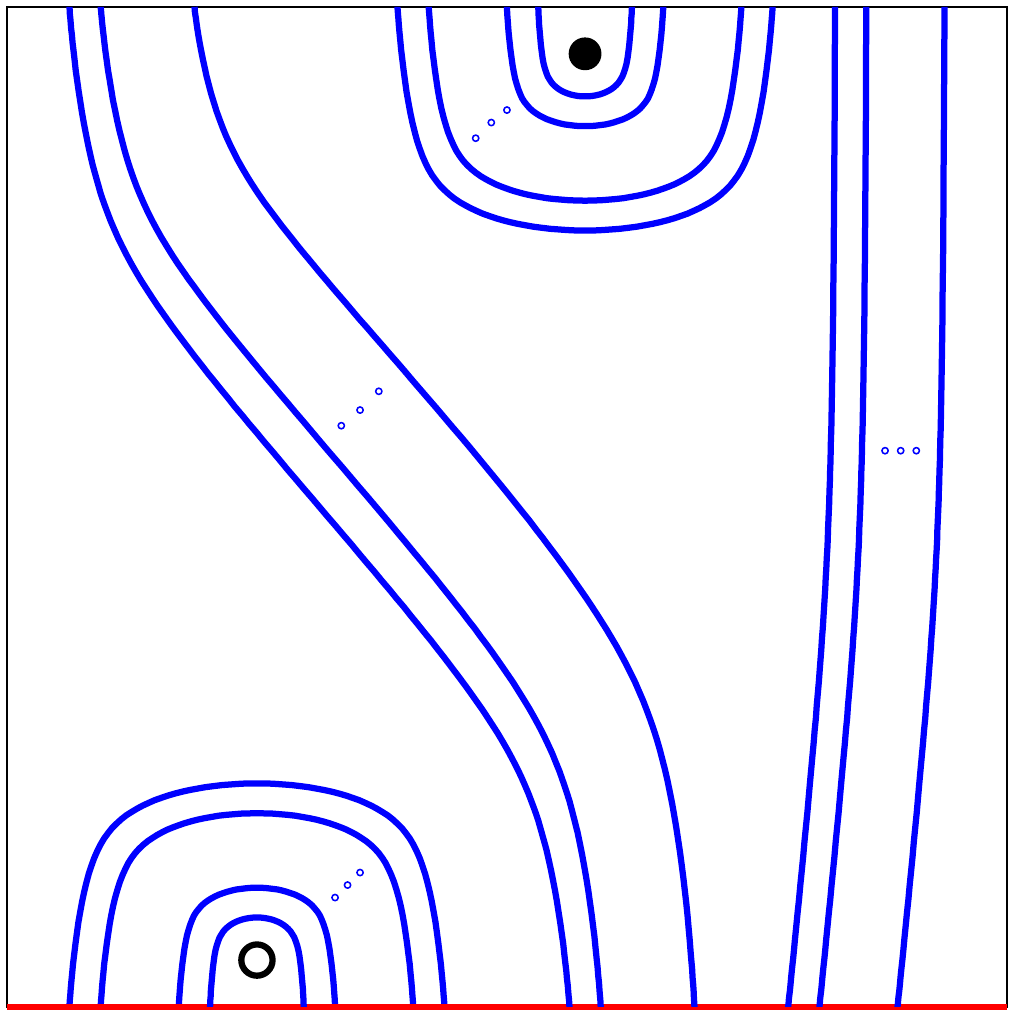}
\put(-100,168){$z$}
\put(-158,8){$w$}
\caption{Standard form for a reduced $(1,1)$ diagram, taken from \cite[Figure 2]{Rasmussen2005}.  The left and right sides of the square are identified in the standard way and the top and bottom sides by a twist.}
\label{fig:pqrs}
\end{figure}

Assuming now that $D$ is reduced, we have $d_0 = 0$ and $\widehat{HFK}(K,s) \approx \widehat{CFK}(D,s)$.
We proceed to analyze the differentials $\widetilde{d}_h$ and $\widetilde{d}_v$.
Without loss of generality, we henceforth fix $\Sigma = \bR^2 / \bZ^2$, $z = \pi(\bZ^2)$, a horizontal line $\wa \subset \bR^2 \smallsetminus \bZ^2$, and $\alpha = \pi(\wa)$.
After a further homeomorphism, we may assume that $D$ takes the form shown in Figure \ref{fig:pqrs}.
Observe that the embedded bigons in $D$ are the obvious ones cobounded by the $\alpha$ with the ``rainbow" arcs of $\beta$ above $w$ and below $z$.
Coherence is the condition that when the $\beta$ curve is oriented, all of the rainbow arcs around a fixed basepoint orient the same way.
This makes it easy to check coherence from a diagram in standard form.

Let $H^\pm$ denote the upper and lower half-planes bounded by $\wa$.
Choose a lift to $\wa$ of any point in $\fT(D,s)$.
There exists a unique lift $\wb_s \subset \bR^2$ that passes through it.
The points of $\wa \cap \wb_s$ are in 1-1 correspondence with $\fT(D,s)$ under $\pi$, and there are an odd number $2n+1$ of them, since $\wa \cdot \wb_s = \chi(\widehat{CFK}(D,s))=1$ with appropriate orientations on the curves.
The curve $\wb_s$ meets each half-plane in one closed ray and $n$ compact arcs.
These compact arcs cobound {\em positive bigons} $D_k^+ \subset H^+$ and {\em negative bigons} $D_k^- \subset H^-$ with $\wa$ for $k=1,\dots,n$.
A positive bigon attains a local maximum, and its image under $\pi$ is the top of one of the rainbow arcs above $w$.
Consequently, it contains a lift of the bigon in $D$ cobounded by that rainbow arc with $\alpha$.
Thus, $n_w(D_k^+) > 0$, and similarly $n_z(D_k^-) > 0$, for all $k$.
Lastly, the positive bigons go from their left-most corner to their right-most corner, and vice versa for the negative bigons.

\begin{lem}
\label{lem:multiplicity}
If $D$ is a reduced $(1,1)$ diagram of $K$ and $\widehat{HFK}(K,s)$ is a positive chain, then
\begin{enumerate}
\item
$n_z(D_k^+) = 0$ and $n_w(D_k^-) = 0$ for all $k$;
\item
the positive bigons are exactly the ones that contribute to $\widetilde{d}_v$, the negative bigons are exactly the ones that contribute to $\widetilde{d}_h$, and there are no other bigons; and
\item
if $\alpha$ and $\beta$ are oriented so that $\alpha \cdot \beta > 0$, then $\wa$ and $\wb_s$ induce coherent orientations on the boundaries of all of the bigons they cobound.
\end{enumerate}
\end{lem}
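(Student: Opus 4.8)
The plan is to combine Lemma~\ref{lem:nodifferentials} with the explicit bigon description of the differentials on a $(1,1)$ diagram. First I would invoke Lemma~\ref{lem:nodifferentials}: since $D$ is reduced, $d_0 = 0$, and $\widehat{HFK}(K,s) = \widehat{CFK}(D,s)$ is a positive chain with basis $x_1,\dots,x_{2n+1}$. Any embedded bigon in $D$ lifts to a bigon between lifts of $\alpha$ and $\beta$, hence gives a disk $\phi \in \pi_2(x,y)$ with $\mu(\phi) = 1$ and $\#\widehat{\cM}(\phi) = \pm 1 \ne 0$. So Lemma~\ref{lem:nodifferentials} applies to each such $\phi$ and forces $x = x_{2k}$, $y = x_{2k\pm1}$, with $n_z(\phi) = 0$ when $y = x_{2k+1}$ and $n_w(\phi) = 0$ when $y = x_{2k-1}$. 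This immediately identifies which bigons can occur: a positive bigon $D_k^+$ has $n_w(D_k^+) > 0$, so it must be of the type going from some $x_{2\ell}$ to $x_{2\ell-1}$ (the $\widetilde{d}_v$-type), whence $n_z(D_k^+) = 0$; symmetrically $n_w(D_k^-) = 0$ for negative bigons. That gives part (1).

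For part (2), I would argue that the $2n$ bigons $D_1^\pm,\dots,D_n^\pm$ are in bijection with the $2n$ arrows of the positive chain, and that there are no other embedded bigons. One direction: each $D_k^+$ contributes a nonzero term to $\widetilde{d}_v$ and, by part (1) and Lemma~\ref{lem:nodifferentials}, this term is $x_{2\ell} \mapsto \pm x_{2\ell-1}$ for some $\ell$; distinct positive bigons must hit distinct arrows because the $2n+1$ intersection points of $\wa \cap \wb_s$ biject with the basis $x_1,\dots,x_{2n+1}$ and each arc of $\wb_s$ in $H^+$ (resp.\ $H^-$) cobounds exactly one such bigon with endpoints a consecutive pair of intersection points along $\wa$; a counting argument (there are exactly $n$ positive bigons and exactly $n$ ``$\widetilde{d}_v$-arrows'' $x_{2\ell}\to x_{2\ell-1}$, and all must be realized since the positive chain structure of $\widehat{HFK}$ records exactly these differentials) pins down the bijection. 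Conversely, the positive chain structure forces every $\widetilde{d}_v$-arrow to be realized by some disk, and on a genus-one reduced diagram the only disks with $\mu=1$ are the embedded bigons, so no arrow goes unrealized and no extra bigon exists. The negative bigons and $\widetilde{d}_h$ are handled the same way by the $z \leftrightarrow w$ symmetry.

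For part (3), having established in (2) that the only embedded bigons are the $D_k^\pm$, I would orient $\wa$ (and hence $\alpha$) and $\wb_s$ (and hence $\beta$) so that $\wa \cdot \wb_s > 0$ at a chosen intersection point; since the $2n+1$ points of $\wa \cap \wb_s$ all have the same local sign (their signed count is $+1$ and there are $2n+1$ of them, so all are $+1$), this holds at every intersection point. Now a positive bigon $D_k^+ \subset H^+$ runs from its left corner to its right corner, and at both corners the orientations of $\wa$ and $\wb_s$ agree in sign; tracing $\partial D_k^+$ one sees that the induced boundary orientation of $D_k^+$ runs along $\wa$ and $\wb_s$ consistently, i.e.\ coherently. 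The same holds for the negative bigons in $H^-$, using that they run from right corner to left corner. Pushing these coherent orientations down via $\pi$ gives coherent orientations on the boundary of every embedded bigon of $D$.

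The main obstacle I anticipate is part (2)---specifically, the bookkeeping that matches the geometric bigons $D_k^\pm$ bijectively with the algebraic arrows of the positive chain and rules out \emph{any} other embedded bigon. The subtlety is that a priori a single arc of $\wb_s$ in a half-plane could conceivably bound a bigon whose projection is not one of the rainbow bigons, or the positive chain could in principle be realized with fewer bigons and some cancellation; ruling this out requires carefully using that $d_0 = 0$ (no length-zero bigons, by reducedness), that $\widehat{HFK}$ is \emph{exactly} a positive chain (so the ranks of $\widetilde d_v, \widetilde d_h$ are each $n$), and the genus-one fact that $\#\widehat{\cM}(\phi) = \pm 1$ for every $\mu = 1$ bigon so there is no cancellation among distinct homotopy classes. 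Once the count ``$n$ positive bigons, $n$ $\widetilde d_v$-arrows, each bigon realizes a distinct arrow, every arrow is realized'' is nailed down, parts (1) and (3) follow quickly.
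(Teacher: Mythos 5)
Your overall strategy for parts (1) and (2) --- feed each lifted bigon into Lemma \ref{lem:nodifferentials} and then count: $n$ positive bigons, $n$ negative bigons, $2n$ arrows in the chain, distinct bigons connecting distinct pairs of generators --- is essentially the paper's argument. One bookkeeping slip there: since $n_w(D_k^+)>0$ and Lemma \ref{lem:nodifferentials} forces $n_w(\phi)=0$ for a disk ending at $x_{2k-1}$, a positive bigon must be a disk from some $x_{2\ell}$ to $x_{2\ell+1}$, not to $x_{2\ell-1}$; that is how one deduces $n_z(D_k^+)=0$. Your sentence asserts the opposite type and then draws the right conclusion anyway, so the labels need to be straightened out, but the idea is the intended one.

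The genuine gap is in part (3). You claim that because the signed count of $\wa \cap \wb_s$ is $+1$ and there are an odd number $2n+1$ of points, all of them have the same local sign. That inference is false: $n+1$ positive and $n$ negative points also give signed count $+1$, and in fact the two corners of any embedded bigon always have \emph{opposite} intersection signs, so whenever $n \ge 1$ the points cannot all share a sign (a reduced diagram in which they do is precisely a simple-knot diagram, with no bigons at all). With that premise gone, the ``trace the boundary'' step proves nothing; worse, your part (3) never actually uses the positive-chain hypothesis, so if it worked it would show that \emph{every} reduced $(1,1)$ diagram is coherent, which the incoherent diagrams of $5_2$ in Figure \ref{fig:examples} refute. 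The paper instead first uses part (2) to conclude that the intersection points occur in the same order, up to reversal, along $\wa$ and $\wb_s$ (so $\wb_s$ is graphic), observes that coherence then reduces to checking the single bigon cornered at the leftmost intersection point, and rules out the incoherent configuration there because it would force $\widetilde{d}_v$ and $\widetilde{d}_h$ to act on the wrong generator of the positive chain. Some appeal of this kind to the chain structure, excluding one of the two possible directions of $\wb_s$ along its arcs, is what your part (3) is missing.
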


\begin{proof}
\begin{inparaenum}
\item
The conditions of Lemma \ref{lem:nodifferentials} are met by each positive and negative bigon and its corners, since $D$ is a reduced $(1,1)$ diagram and each positive and negative bigon is an embedded bigon in $\bR^2$.
The conclusion now follows from the last conclusion of Lemma \ref{lem:nodifferentials} and the remark about multiplicities just before this Lemma.

\item
The positive chain basis elements are homogeneous and in different bigradings, while generators are homogeneous.
It follows that the positive chain basis is comprised of generators (up to sign).
Suppose that there exists an embedded bigon with $n_w = 0$ that goes from a lift of a generator $x$ to a lift of a generator $y$ in $\wa \cap \wb_s$.
Then it is the unique such bigon, and the coefficient of $y$ in $\widetilde{d}_v(x)$ is non-zero.
Moreover, $\widetilde{d}_v(x) = \pm y$, because $x$ and $y$ are (signed) positive chain basis elements.
By part (1), each of the $n$ positive bigons contributes to $\widetilde{d}_v$ and connects a different pair of generators.
Since $\rk \, \widehat{HFK}(K,s) = 2n+1$, positive bigons are precisely those that contribute to $\widetilde{d}_v$.
The same remarks apply to the negative bigons and $\widetilde{d}_h$.
Lemma \ref{lem:nodifferentials} shows that there are no other bigons.

\item
By part (2) and the remark just before the Lemma, it follows that the points of intersection between $\wa$ and $\wb_s$ occur in the same order along these curves, up to reversal.
Suppose that $\alpha \cdot \beta > 0$, so $\wa \cdot \wb_s = +1$.
Without loss of generality, $\wa$ orients from left to right.
It suffices to check coherence on the boundary of the bigon with a corner at the leftmost point of intersection $x \in \wa \cap \wb_s$.
Note that $x$ is a positive point of intersection, as the signs of the intersections alternate along $\wa$ and sum to $+1$.
If there is no such bigon, then the conclusion is automatic (and $\mathrm{rk} \, \widehat{HFK}(K,s) = 1$ in this case).
If there is a bigon, label its other corner $y$.
If it is a positive bigon, then $\widetilde{d}_h(x) = y$, while $\widetilde{d}_v(x)=0$, in violation of the form of $\widehat{HFK}(K,s)$.
Therefore, it is a negative bigon, and its boundary is coherently oriented by $\wa$ and $\wb_s$.
\end{inparaenum}
\end{proof}

Drawing on the proof of Lemma \ref{lem:multiplicity}, we make a definition and prove a converse.

\begin{defin}
\label{def:graphic}
A curve $\wb \subset \bR^2$ is {\bf graphic} if its intersection points with $\widetilde{\alpha}$ occur in the same or opposite orders along $\widetilde{\alpha}$ and $\widetilde{\beta}$.
It is {\em positive} if they occur in opposite orders and {\em negative} if they occur in the same order.
\end{defin}

\noindent
The bottom right picture in Figure \ref{fig:HFK} displays a positive graphic curve.
The reason for the terminology is that a curve $\wb \subset \bR^2$ is graphic iff there exists a homeomorphism of $\bR^2$ taking $(\wa,\wb)$ to the pair consisting of the $x$-axis and the graph of an odd degree polynomial with all roots real and distinct.
Its sign, positive or negative, is minus the sign of the leading coefficient of the polynomial.

\begin{prop}
\label{prop:graphic}
If $D$ is a reduced $(1,1)$ diagram of $K$, then $\widehat{HFK}(K,s)$ is a positive or a negative chain if and only if $\wb_s$ is positive or negative graphic, respectively.
\end{prop}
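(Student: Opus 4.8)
The plan is to establish both directions, drawing on the structural analysis already carried out in the proof of Lemma \ref{lem:multiplicity}. The forward direction is essentially done: if $\widehat{HFK}(K,s)$ is a positive chain, then part (2) of Lemma \ref{lem:multiplicity} tells us the positive bigons are exactly those contributing to $\widetilde d_v$ and the negative bigons exactly those contributing to $\widetilde d_h$, with no other bigons; combined with the remark that positive bigons go from their leftmost to rightmost corner while negative bigons go the other way, this forces the intersection points of $\wa$ and $\wb_s$ to appear in opposite orders along the two curves — precisely the condition that $\wb_s$ be positive graphic. (A sign check, as in part (3), confirms that ``positive chain'' matches the ``positive'' rather than the ``negative'' case: in a positive chain $\widetilde d_v$ raises $a$ and contributes over the $w$ basepoint via positive bigons, and the leftmost intersection point sits at a corner of a negative bigon when $\alpha\cdot\beta>0$.) The negative chain statement follows by the mirroring symmetry already invoked after Theorem \ref{thm:PositiveLspace} and Definition \ref{def:graphic}: mirroring swaps $z\leftrightarrow w$, $\widetilde d_v\leftrightarrow\widetilde d_h$, positive bigons $\leftrightarrow$ negative bigons, and reverses the sign of a graphic curve, so it interchanges the two cases wholesale.

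For the converse, the plan is to suppose $\wb_s$ is positive graphic and read off the complex directly. Since the diagram is reduced and of genus one, the differentials $\widetilde d_v$ and $\widetilde d_h$ on $\widehat{HFK}(K,s)\approx\widehat{CFK}(D,s)$ are computed by counting embedded bigons in $\bR^2$ cobounded by lifts of $\alpha$ and $\beta$, each such bigon contributing $\pm 1$. Because $\wb_s$ is graphic, after applying the homeomorphism taking $(\wa,\wb_s)$ to the $x$-axis and the graph of a real-rooted odd-degree polynomial $p$ with negative leading coefficient, the picture is completely explicit: if the $2n+1$ roots are $r_1<\cdots<r_{2n+1}$, then between consecutive roots the graph bounds a single bigon with $\wa$, lying in $H^+$ when $p>0$ there and in $H^-$ when $p<0$. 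With negative leading coefficient, $p>0$ on $(-\infty,r_1)$ — but that is an unbounded region, not a bigon — and the bounded bigons alternate: $H^+$ over $(r_1,r_2)$, $H^-$ over $(r_2,r_3)$, and so on, giving $n$ positive bigons $D_k^+$ and $n$ negative bigons $D_k^-$. Reading off corners: $D_k^+$ joins the generators at $r_{2k}$ and $r_{2k+1}$ going left-to-right, while $D_k^-$ joins $r_{2k-1}$ and $r_{2k}$. Labeling $x_\ell$ for the generator at $r_\ell$, the positive bigon at $(r_{2k},r_{2k+1})$ shows $\widetilde d_v(x_{2k})=\pm x_{2k+1}$ and the negative bigon at $(r_{2k-1},r_{2k})$ shows $\widetilde d_h(x_{2k})=\pm x_{2k-1}$, with the odd-indexed generators killed by nothing and mapping nowhere. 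Comparing with Definition \ref{def:positivechain}, this is exactly a positive chain after possibly relabeling $x_1,\dots,x_{2n+1}$ in reverse order (so that the $v$-bigons attach on the $x_{2k-1}$ side and $h$-bigons on the $x_{2k+1}$ side); the homogeneity requirement is automatic since generators are homogeneous. The negative graphic case again follows by mirroring.

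The one point requiring genuine care — and the step I expect to be the main obstacle — is bookkeeping the sign/orientation conventions so that ``positive graphic'' lands on ``positive chain'' and not on ``negative chain.'' There are several conventions in play at once: the sign of $\alpha\cdot\beta$ feeding into the sign of $\wa\cdot\wb_s$; the statement in Definition \ref{def:graphic} that sign equals minus the sign of the leading coefficient; the fact from the paragraph before Lemma \ref{lem:multiplicity} that positive bigons sit over $w$ (so contribute to $\widetilde d_v$) and negative bigons over $z$ (so contribute to $\widetilde d_h$); and the fact that in a positive chain it is $\widetilde d_v$ that raises $a$ while $\widetilde d_h$ lowers it. The cleanest way to pin this down is to run the argument of Lemma \ref{lem:multiplicity}(3) in reverse: assume positive graphic, look at the bigon with a corner at the leftmost intersection point $x$, observe it must be a negative bigon (since with negative leading coefficient the region immediately to the left of $r_1$ lies in $H^+$ and is unbounded, so the first \emph{bounded} bigon, over $(r_1,r_2)$, lies in $H^+$ — wait, that is a positive bigon; so in fact one should orient $\wa$ from right to left, or equivalently track that the leftmost \emph{bounded} bigon being positive forces $\widetilde d_v(x)\ne 0$ and $\widetilde d_h(x)=0$ on the appropriate end generator, matching the positive chain form). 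Getting this dichotomy exactly right, consistently with the sign convention in Definition \ref{def:graphic} and the Alexander-grading behavior of $\widetilde d_v$ versus $\widetilde d_h$, is the crux; everything else is a direct transcription of the holomorphic-disk count into combinatorics of the polynomial graph.
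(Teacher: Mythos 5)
Your forward direction matches the paper's (quote Lemma \ref{lem:multiplicity}(2),(3)), but your converse has a genuine gap: you assert that each positive or negative bigon ``contributes $\pm 1$'' to the appropriate differential, and this is exactly the point that cannot be taken for granted. A reduced diagram only guarantees that every bigon contains \emph{at least} one basepoint, not exactly one. A positive bigon always has $n_w>0$; if it also has $n_z>0$ it satisfies neither the defining multiplicity constraint of $\widetilde{d}_v$ ($n_w=0$, $n_z>0$) nor that of $\widetilde{d}_h$ ($n_w>0$, $n_z=0$), so it contributes to \emph{neither} differential. Your explicit polynomial-graph picture correctly identifies all the bigons and their corners, but it says nothing about which basepoints each one covers. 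If even one bigon covered both $z$ and $w$, the zigzag would be broken and $\widehat{HFK}(K,s)$ would not be a positive chain, so the converse as you have written it does not go through. The paper closes this by writing the differentials with coefficients $\delta_k,\epsilon_k\in\{-1,0,+1\}$ (zero exactly when the bigon contains both basepoints) and then invoking
$H_*(\widehat{HFK}(K,s),\widetilde{d}_v)\approx\widehat{HF}(Y,i_v(s))\approx\bZ$ and the analogous statement for $\widetilde{d}_h$ --- valid because $Y$ admits a genus one splitting and is therefore an L-space --- to force every $\delta_k$ and $\epsilon_k$ to be nonzero. This homological input is the one nontrivial step of the converse, and it is absent from your argument.

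Secondarily, your third paragraph flags the sign bookkeeping as ``the crux'' but then visibly fails to resolve it (the parenthetical self-correction ends midstream without settling whether the leftmost bounded bigon is positive or negative, and hence whether positive graphic lands on positive chain). In a finished proof you would need to carry this out; the mechanism is the one in Lemma \ref{lem:multiplicity}(3), namely that with $\wa\cdot\wb_s=+1$ the leftmost intersection point is positive and the type of the bigon cornered there is determined by which of $\widetilde{d}_v(x_1)$, $\widetilde{d}_h(x_1)$ is permitted to be nonzero in Definition \ref{def:positivechain}. This is a fixable presentational defect; the missing basepoint/homology argument above is the substantive one.
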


\begin{proof}
If $\widehat{HFK}(K,s)$ is a positive chain, then Lemma \ref{lem:multiplicity} parts (2) and (3) show that $\wb_s$ is positive graphic.
Conversely, suppose that $\wb_s$ is positive graphic.
Label the points of intersection in $\wa \cap \wb_s$ by $x_1,\dots,x_{2n+1}$ in the order they occur along $\wa$.
The only bigons cobounded by $\wa$ and $\wb_s$ are the positive and negative bigons. This latter fact can be verified using the equivalent definition of a graphic curve mentioned after Definition~\ref{def:graphic}. 
Therefore,
\[
\widetilde{d}_v(x_{2k}) = \delta_k \cdot x_{2k-1}, \quad \widetilde{d}_h(x_{2k}) = \epsilon_k \cdot x_{2k+1}, \quad \widetilde{d}_v(x_{2k-1}) = \widetilde{d}_h(x_{2k-1}) = 0,
\]
for all $k$ and some $\delta_k, \epsilon_k \in \{-1,0,+1\}$.  A value $\delta_k$ or $\epsilon_k$ is 0 iff the corresponding bigon contains both $z$ and $w$ basepoints.
Since
\[
H_*(\widehat{HFK}(K,s),\widetilde{d}_v) \approx \widehat{HF}(Y,i_v(s)) \approx \bZ
\quad \textup{and} \quad
H_*(\widehat{HFK}(K,s),\widetilde{d}_h) \approx \widehat{HF}(Y,i_h(s)) \approx \bZ,
\]
it follows that $\delta_k, \epsilon_k \ne 0$ for all $k$, and $\widehat{HFK}(K,s)$ is a positive chain.
The corresponding statements for a negative chain and negative coherent diagram follow as well.
\end{proof}

\begin{proof}
[Proof of Theorem \ref{thm:main}]
Let $D$ denote a reduced $(1,1)$-diagram of $K$.

Suppose first that $K$ is a positive L-space knot.
Orient $\alpha$ and $\beta$ so that $\alpha \cdot \beta > 0$, and suppose that $B$ is an embedded bigon in $D$.
We must show that $\alpha$ and $\beta$ coherently orient $\del B$.
Suppose that the corners of $B$ belong to $\fT(s)$, and let $\widetilde{B}$ denote the lift of $B$ to $\bR^2$ with corners in $\wa \cap \wb_s$.
We have $\wa \cdot \wb_s = +1$.
Since $K$ is a positive L-space knot, Theorem \ref{thm:PositiveLspace} implies that $\widehat{HFK}(K,s)$ is a positive chain, and by Lemma \ref{lem:multiplicity}(3), the orientations on $\wa$ and $\wb_s$ coherently orient the boundaries of all of the bigons they cobound.
In particular, they coherently orient $\del \widetilde{B}$, so $\alpha$ and $\beta$ coherently orient $\del B$.
Therefore, $D$ is positive coherent.

Conversely, suppose that $D$ is positive coherent.
Orient $\alpha$ and $\beta$ so that $\alpha \cdot \beta > 0$ and $\wa$ orients from left to right.
Fix $s \in \Spc(K)$ and select any $x \in \wa \cap \wb_s$.
It is the endpoint of a closed ray of $\wb_s$ oriented out of it.
If this ray meets $\wa$ in another point, then let $y$ denote the first such point.
The arc of $\wb_s$ from $x$ to $y$ cobounds a bigon $\widetilde{B}$ with $\wa$.
If $y$ lies to the left of $x$, then $\del \widetilde{B}$ is incoherently oriented by $\wa$ and $\wb_s$.
The arc of $\wb_s$ has an extremal (highest or lowest) point which lifts the extremal point of a rainbow arc.
Projecting $\widetilde{B}$ to $D$ by $\pi$, it follows that this rainbow arc cobounds an embedded bigon with $\alpha$ that is incoherently oriented by $\alpha$ and $\beta$, a contradiction.
Therefore, $y$ lies to the right of $x$.
It follows by induction that the points of $\wa \cap \wb_s$ occur in the opposite orders along $\wa$ and $\wb_s$ with respect to these curves' orientations.
It follows that $\wb_s$ is graphic, and since $\alpha \cdot \beta > 0$, it is positive graphic.
By Proposition \ref{prop:graphic}, $\widehat{HFK}(K,s)$ is a positive chain.
Therefore, $\widehat{HFK}(K)$ consists of positive chains, and $K$ is a positive L-space knot by Theorem \ref{thm:PositiveLspace}.

The corresponding statements for negative coherent diagrams and negative L-space knots follow by a similar argument.
\end{proof}


\section{1-bridge braids.}
\label{sec:1-bridge}

This section consists of two parts.
The first is devoted to the proof of Theorem~\ref{thm:1-Bridge} and its sign-refined version, Theorem~\ref{thm:1-bridgerefinement}.
The second is a vignette on how Theorem~\ref{thm:1-bridgerefinement} pertains to solid torus fillings on 1-bridge braid exteriors.
Specifically, we give a novel argument in Proposition \ref{prop:unique} to characterize the {\em Berge manifold} amongst 1-bridge braid exteriors.


\subsection{1-bridge braids are L-space knots}
\label{ss:1-bridge}
We prepare by describing a reduced $(1,1)$-diagram of a 1-bridge braid in a rational homology sphere.

Let $\gamma \cup \delta \subset S^1 \times D^2$ denote a 1-bridge braid.
Identify $\del (S^1 \times D^2)$ with the flat torus $\Sigma$ in such a way that meridians $\theta \times \del D^2$ lift to horizontal lines and longitudes $S^1 \times x$ lift to vertical lines under $\pi : \bR^2 \to \Sigma$.
Isotope $\gamma$ rel endpoints into a geodesic on $\Sigma$, so that $\pi^{-1}(\gamma)$ consists of parallel line segments in $\bR^2$ of some common slope $s(\gamma) \in P^1(\bR)$.
Orient $\gamma$ and write $\del \gamma = z - w$ so that in each lift of $\gamma$ to $\bR^2$, the lift of $z$ lies below that of $w$.

Form a genus one rational homology sphere $Y$ by gluing a solid torus $V$ to $S^1 \times D^2$ along their boundaries, and let $K$ denote the image of $\gamma \cup \delta$ in $Y$.
Let $\alpha$ denote a meridian of $S^1 \times D^2$ and let $\beta_0$ denote the curve to which the meridian of $V$ attaches, which we take to be a geodesic.
In meridian-longitude coordinates, $\beta_0$ has some slope $p/q \ne 0/1$, and $Y$ is homeomorphic to $L(p,q)$ if $p \ne 1$ and $S^3$ otherwise.
Assume that $\alpha$ and $\beta_0$ are disjoint from $\del \gamma$.
Isotope $\beta_0$ by a finger-move along $\gamma$ into a curve $\beta_1$ disjoint from $\gamma$.
Observe that $(\Sigma,\alpha,\beta_1,z,w)$ is a doubly-pointed Heegaard diagram for $K \subset Y$.
Put it into reduced form by further isotoping $\beta_1$ so as to eliminate all bigons that do not contain basepoints, one by one, resulting in a curve $\beta_2$.
Let $D = (\Sigma,\alpha,\beta_2,z,w)$ denote the resulting reduced diagram of $K \subset Y$.
Figure \ref{fig:HFK} exhibits this procedure for the inclusion of the 1-bridge braid $K(7,4,2)$ into $S^3$, the pretzel knot $P(-2,3,7)$.
The notation derives from the braid presentation of 1-bridge braids; see \cite{Berge1991, Gabai1990,wu2004}. The $1$--bridge braid $K(\omega,b,m)$ denotes the closure of the braid word $(\sigma_b \sigma_{b-1} \cdots \sigma_1)(\sigma_{\omega-1} \sigma_{\omega-2} \cdots \sigma_1)^m$ in $S^1 \times D^2$, where the $\sigma_i$ are the generators of the braid group~$B_w$. Note that $K(\omega, b, m)$ and $K(\omega, b, m+\omega)$ differ by a full twist, so there is a diffeomorphism of the solid torus that carries one to the other. Therefore, we may assume that $0\le m <w$.

\begin{figure}
\includegraphics[width=6in]{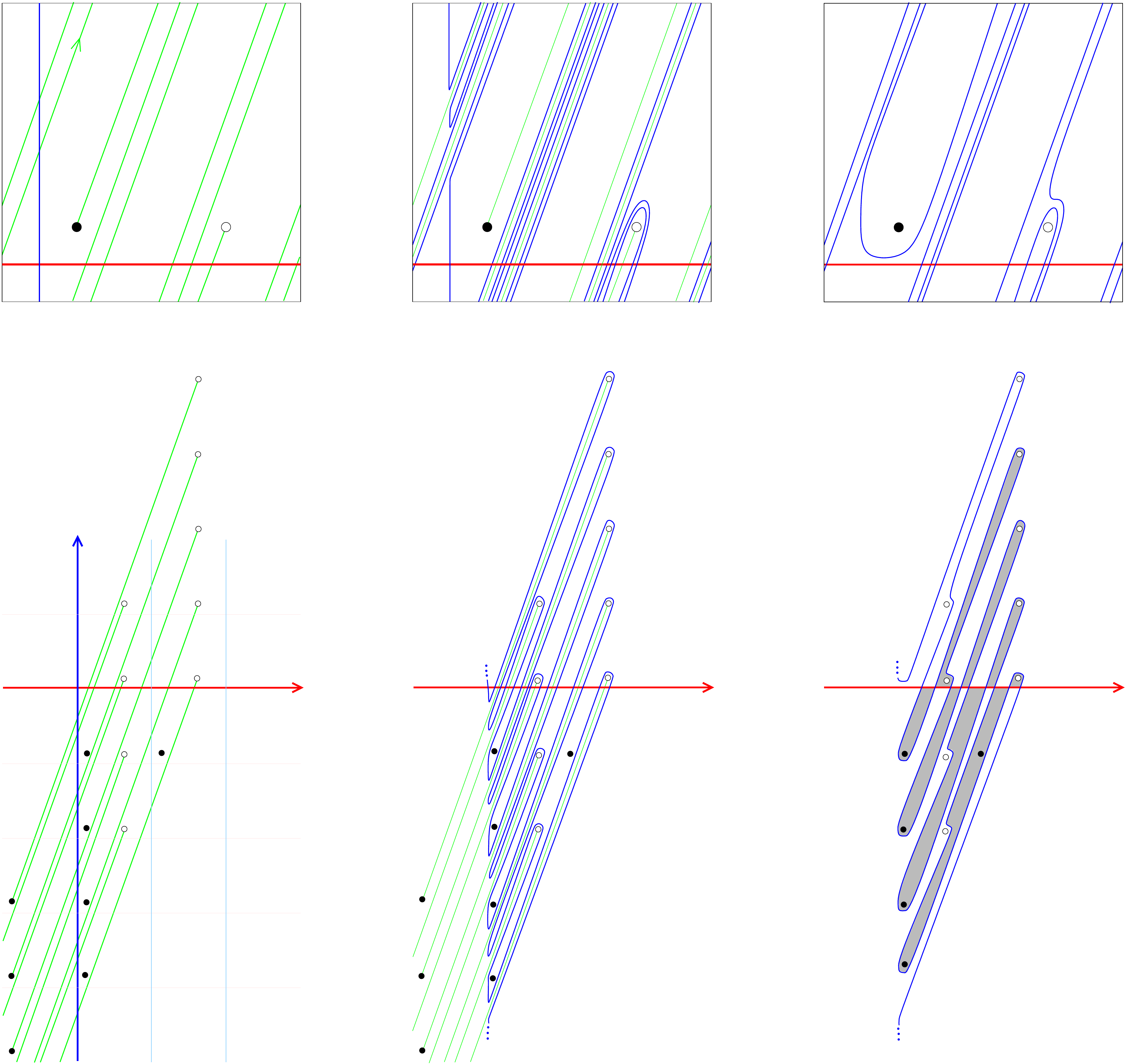}
\put(-415,315){$z$}
\put(-340,315){$w$}
\put(-445,307){\color{red} $\alpha$}
\put(-400,360){$\gamma$}
\put(-422,280){\color{blue} $\beta_0$}
\put(-262,280){\color{blue} $\beta_1$}
\put(-90,280){\color{blue} $\beta_2$}
\caption{Constructing a positive coherent diagram of $P(-2,3,7)$, the inclusion of $K(7,4,2)$ into $S^3$.
Moving from left to right illustrates the procedure described in the proof of Theorem \ref{thm:main}.
At top are diagrams on $\Sigma$ and at bottom their lifts to $\bR^2$.
The bigons at bottom right are shaded for emphasis.
}\label{fig:HFK}
\end{figure}

A pair of distinct slopes $s_1, s_2 \in P^1(\bQ)$ determines an interval $[s_1,s_2] \subset P^1(\bQ)$ oriented from $s_1$ to $s_2$ in the counterclockwise sense; thus, $[s_1,s_2] \cup [s_2, s_1] = P^1(\bQ)$ and $[s_1,s_2] \cap [s_2,s_1] = \{s_1,s_2\}$.

\begin{prop}
\label{prop:coherent}
The diagram $D$ is coherent.  Furthermore, it is positive if $s(\gamma) \in [0,p/q]$ and negative if $s(\gamma) \in [p/q,0]$.
\end{prop}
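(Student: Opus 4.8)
The plan is to analyze the reduced diagram $D = (\Sigma, \alpha, \beta_2, z, w)$ directly, tracking how $\beta_0$ is transformed into $\beta_2$ and keeping careful control of orientations throughout. First I would orient $\alpha$ and $\beta_0$ as geodesics and compute $\alpha \cdot \beta_0$: since $\beta_0$ has slope $p/q$ and $\alpha$ is a meridian (slope $0/1$), with the standard orientation conventions the sign of $\alpha \cdot \beta_0$ is determined by whether $p/q$ lies in $[0, p/q]$ versus $[p/q, 0]$ — i.e., by the sign of $p$ relative to $q$ in the chosen coordinates. The key observation is that the finger-move isotopy from $\beta_0$ to $\beta_1$ along $\gamma$, followed by the bigon-elimination isotopy from $\beta_1$ to $\beta_2$, are both isotopies, so they do not change $[\beta_2] = [\beta_0]$ in homology, hence do not change $\alpha \cdot \beta_2$ once curves are coherently oriented. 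Therefore the sign in Definition~\ref{defin:Coherent}, if $D$ is coherent, must be the sign of $\alpha \cdot \beta_0$, and the real content is (i) showing $D$ is coherent at all, and (ii) identifying that sign with the interval condition on $s(\gamma)$.

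For coherence itself, I would use the description of embedded bigons in a standard-form reduced $(1,1)$ diagram given in Subsection~\ref{ss:11}: the bigons are exactly the ones cobounded by $\alpha$ with the ``rainbow'' arcs of $\beta_2$ over $w$ and under $z$, and coherence is the condition that, with $\beta_2$ oriented, all rainbow arcs around a fixed basepoint orient the same way. The finger-move that creates $\beta_1$ from $\beta_0$ pushes a sub-arc of $\beta_0$ along $\gamma$ across the point $z - w = \partial\gamma$; because $\gamma$ is a geodesic of constant slope $s(\gamma)$ and all its lifts to $\bR^2$ are parallel segments with the lift of $z$ below the lift of $w$, the resulting fingers all ``wrap around'' the basepoints in a consistent direction dictated by the slope $s(\gamma)$. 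Bigon elimination only removes basepoint-free bigons and does not introduce new rainbow arcs of the opposite orientation, so the coherence (or lack thereof) is already visible after the finger move. The cleanest way to organize this is to lift everything to $\bR^2$: the lifts of $\beta_2$ consist of $\wa$-parallel behavior away from the fingers, plus finger segments running essentially parallel to the lifts of $\gamma$; each rainbow arc over $w$ (resp. under $z$) is then a cap joining two consecutive such finger strands, and its orientation is forced by the orientation of $\beta_0$ together with the slope $s(\gamma)$. One reads off that all the $w$-rainbows agree with each other and all the $z$-rainbows agree with each other, giving coherence.

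To pin down the sign, I would compute $\alpha \cdot \beta_0$ with coherent orientations in meridian-longitude coordinates: writing $\alpha = (0,1)$ (meridian) and $\beta_0 = (q, p)$ up to sign and reorientation so that $\alpha \cdot \beta_0 > 0$ forces a choice of sign on $\beta_0$, and then checking this choice against the convention $\partial\gamma = z - w$ with $z$ below $w$ in each lift. The upshot should be: the induced orientation on $\beta_2$ makes the rainbow arcs over $w$ point ``to the right'' precisely when $s(\gamma) \in [0, p/q]$ (counterclockwise from the meridian to $\beta_0$), and ``to the left'' when $s(\gamma) \in [p/q, 0]$; and $\alpha \cdot \beta_2 > 0$ in the former case, $< 0$ in the latter. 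Matching this with Definition~\ref{defin:Coherent} gives positive coherent when $s(\gamma) \in [0,p/q]$ and negative coherent when $s(\gamma) \in [p/q,0]$. I expect the main obstacle to be bookkeeping the orientation conventions carefully enough that the interval $[0,p/q]$ (a counterclockwise arc in $P^1(\bR)$) is correctly correlated with the sign of $\alpha \cdot \beta_2$ — in particular making sure the degenerate/boundary cases $s(\gamma) = 0$ and $s(\gamma) = p/q$ (where $\gamma$ is parallel to $\alpha$ or to $\beta_0$) are handled, and that the finger-move picture is genuinely faithful to the slope data rather than to some artifact of the chosen standard form. A helper lemma recording the precise local picture of a single finger of $\beta_1$ near $\partial\gamma$, with oriented boundary labeled, would make the orientation comparison mechanical.
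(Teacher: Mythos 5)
Your overall strategy---lift to $\bR^2$, analyze the fingers created by pushing $\beta_0$ along the parallel lifts of $\gamma$, and read off orientations---is in the same spirit as the paper's, but the two places where you wave your hands are exactly where the paper's proof does its real work, and as written they are gaps.

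First, coherence. You verify (plausibly) that the rainbow arcs of $\beta_1$ are all coherently oriented, and then assert that bigon elimination ``does not introduce new rainbow arcs of the opposite orientation, so the coherence (or lack thereof) is already visible after the finger move.'' This is not justified by the observation that only basepoint-free bigons are removed: cancelling an empty bigon merges the adjacent arcs of $\beta$ and can create \emph{new} embedded bigons (that is why the reduction is iterative), and a newly created bigon may contain a basepoint, i.e.\ be a new rainbow arc whose orientation you have not controlled. Moreover, coherence (Definition~\ref{defin:Coherent}) is a condition on the \emph{reduced} diagram, so you cannot simply certify it on $\beta_1$. The paper sidesteps all of this by proving a stronger, stabler statement: the points of $\wa \cap \wb_1$ occur in the same order along $\wa$ and $\wb_1$ (each lift of $\gamma$ crossing $\wa$ contributes a consecutive pair $x_{2i},x_{2i+1}$ near the single point $x_1$ of $\wa\cap\wb_0$), i.e.\ $\wb_1$ is graphic in the sense of Definition~\ref{def:graphic}. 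Since each bigon elimination deletes a pair of intersection points consecutive along \emph{both} curves, being graphic is manifestly preserved, so $\wb_2$ is graphic and coherence of $D$ follows. If you want to keep the rainbow-arc formulation, you need either an analogous stability statement under reduction or to route through the ordering of intersection points as the paper does.

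Second, the sign. Your opening claim that the sign of the coherent diagram ``must be the sign of $\alpha\cdot\beta_0$'' is circular: an algebraic intersection number has no preferred sign until both curves are oriented, $|\alpha\cdot\beta_2|=|\alpha\cdot\beta_0|$ for either choice, and the coherent orientation of $\beta_2$ is exactly the datum that depends on $s(\gamma)$---identifying \emph{which} orientation is the coherent one is the content of the proposition. Your later paragraph states the correct correlation with $s(\gamma)\in[0,p/q]$ versus $[p/q,0]$ only as ``the upshot should be,'' without a derivation. In the paper this falls out of the same ordering argument: $\wb_2$ is positive or negative graphic according to whether the $x$-coordinates of $x_1,\dots,x_{2n+1}$ decrease or increase with the index, and that dichotomy is governed precisely by whether $s(\gamma)$ lies in $[0,p/q]$ or in $[p/q,0]$. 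So the single ``graphic'' computation delivers both coherence and its sign at once, whereas in your outline each is a separate unproved assertion.
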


\noindent
Note that Proposition \ref{prop:coherent} does {\em not} characterize the sign of the coherence in terms of $s(\gamma)$.
Theorem \ref{thm:1-bridgerefinement} does so in terms of the {\em slope interval}, and its proof relies on Proposition \ref{prop:coherent}.

\begin{proof}
Choose lifts of $\alpha$ and $\beta_0$ to $\bR^2$.
They meet in a single point of intersection $x_1$.
Consider the lifts of $\gamma$ that meet both $\wa$ and $\wb_0$ and in that order relative to their lifted orientations.
Observe that these lifts meet the same component of $\wa - x_1$.
Label the points of intersection between these lifts and $\wa$ by $y_2,\dots,y_n$ in the order that they appear along this component, moving away from $x_1$.
The isotopy from $\beta_0$ to $\beta_1$ lifts to one from $\wb_0$ to $\wb_1$.
Nearby each $y_i$ is a pair of intersection points $x_{2i}, x_{2i+1}$ between $\wa$ and $\wb_1$.
The points $x_1,\dots,x_{2n+1}$ occur in that order along both $\wa$ and $\wb_1$.
Thus, $\wb_1$ is graphic.
The isotopy from $\beta_1$ to $\beta_2$ lifts to one from $\wb_1$ to $\wb_2$.
Each elimination of a bigon eliminates a pair of intersection points that are consecutive along both $\wa$ and $\wb_1$.
Therefore, the intersection points between $\wa$ and $\wb_2$ are a subset of those between $\wa$ and $\wb_1$, and they occur in the same order along $\wa$ and $\wb_2$.
It follows that $\wb_2$ is graphic, as well.
Furthermore, $\wb_2$ is positive graphic if the $x$-coordinates of $x_1,\dots,x_{2n+1}$ decrease with index and negative graphic if they increase with index.
This is the case if $s(\gamma)$ belongs to $[0,p/q]$ or in $[p/q,0]$, respectively.
In particular, it is independent of the choice of lift of $\beta_2$.
Thus, all lifts of $\beta_2$ are $\varepsilon$-graphic for the same choice of sign $\varepsilon$.
It follows that $D$ is coherent, and the sign-refined statement follows as well.
\end{proof}

\begin{proof}[Proof of Theorem \ref{thm:1-Bridge}]
Immediate from Proposition \ref{prop:coherent} and Theorem \ref{thm:main}.
\end{proof}

\begin{figure}
\includegraphics[width=4in]{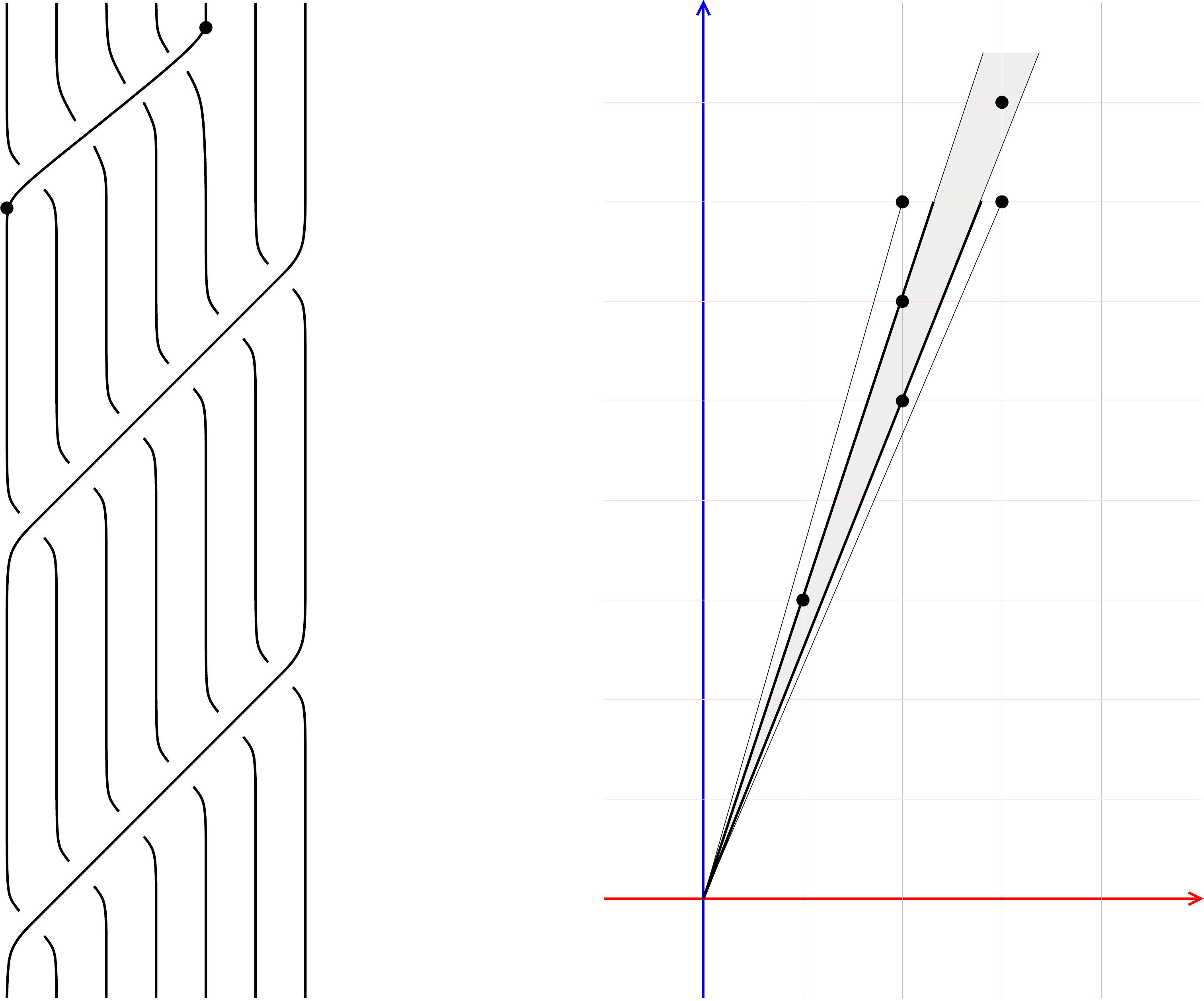}
\caption{At left, a braid with closure the 1-bridge braid $K(7,4,2) \subset S^1 \times D^2$. The bold points suggest its decomposition into the form $\gamma \cup \delta$. At right, a sweep-out of arcs in $\bR^2$ indicating that $I(\gamma) = [5/2,3]$.  The bold points arise in the proof of Proposition \ref{prop:unique}.}
\label{fig:724plane}
\end{figure}

We proceed to sharpen the statement of Theorem \ref{thm:1-Bridge}.
To do so, we introduce the notion of a strict 1-bridge braid and its basic invariants: the {\em winding number} and {\em slope interval}.
Choose the lift of $\gamma$ with one endpoint at $(0,0)$.
Its other endpoint takes the form $(t,\omega) \in \bR \times \bZ$.
We have $\omega > 0$ by our convention on the basepoints, and $t$ is not a proper divisor of $\omega$.
Moreover, $\omega$ is an invariant of the isotopy type of $K$, since $[K] = \omega \cdot [S^1 \times x] \in H_1(S^1 \times D^2; \bZ)$.
This is the {\em winding number} $\omega(K)$.

Consider the sweep-out of line segments with one endpoint at $(0,0)$ and the other endpoint varying along line $y = \omega$.
There exists a maximal {\em slope interval} $I(\gamma) \subset P^1(\bQ)$ containing $s(\gamma)$ and so that the sweep-out of line segments through slopes in $I(\gamma)$ contains no lattice point in its interior.  If $\omega=1$, then $I(\gamma)=P^1(\bQ) \smallsetminus \{0\}$, and otherwise $I(\gamma)= [s_-(\gamma),s_+(\gamma)]$ with $s_-(\gamma) \le s_+(\gamma)$.
The sweep-out descends to an isotopy through 1-bridge braids with slopes in the interior of $I(\gamma)$.
If a line segment of slope $s_\pm(\gamma)$ has endpoint $(q,\omega) \in \bZ^2$, then write $(q,\omega) = (d q', d \omega')$, where $d = \gcd(q,\omega)$.
Let $\mu$ denote the meridian of the torus knot $T(q',\omega')$ and $\lambda$ the surface framing, oriented so that $\mu \cdot \lambda = +1$.
If $d=1$, then $K$ is isotopic to the torus knot $T(q,\omega)$, and if $d > 1$, then $K$ is isotopic to the $(d \lambda \pm \mu)$-cable of $T(q',\omega')$, where the sign is positive or negative if the slope of the line segment is $s_+(\gamma)$ or $s_-(\gamma)$, respectively.
We call such a cable an {\em exceptional cable of a torus knot}.
Otherwise, if no line segment has endpoint $(q,\omega) \in \bZ^2$, then we call $K$ a {\em strict 1-bridge braid} (and justify the terminology in Corollary \ref{cor:invariants}).
In this case, there exists a unique $m \in \bZ$ so that $m < t < m+1$, and $s_-(\gamma)$ and $s_+(\gamma)$ are consecutive terms in the {\em Farey sequence} of fractions from $\omega/(m+1)$ to $\omega/m$ whose denominators are bounded by $|m|$ when expressed in lowest terms.

For the following result, assume $p > q \ge 0$ are coprime integers, and let $L(1,0)$ denote $S^3$.
A {\em simple knot} in $L(p,q)$ is a $(1,1)$ knot whose defining arcs are contained in meridian disks 
in the respective Heegaard solid tori, where the disks' boundaries meet in $p$ transverse points of intersection.
We permit the degenerate case that the the cable arc $\gamma$ is a simple closed curve and the bridge $\delta$ is a point, in which case the simple knot is an unknot.
Equivalently, a simple knot is a knot admitting a $(1,1)$ diagram in which all points of intersection between $\alpha$ and $\beta$ have the same sign.
Note that every simple knot is a 1-bridge braid: to see this, push one of the defining arcs onto the Heegaard torus.

\begin{thm}
\label{thm:1-bridgerefinement}
The inclusion of a strict 1-bridge braid $\gamma \cup \delta \subset S^1 \times D^2$ into $L(p,q)$ is
\begin{enumerate}
\item
a positive L-space knot iff $s_-(\gamma) \in [0,p/q]$;
\item
a negative L-space knot iff $s_+(\gamma) \in [p/q,0]$; and
\item
a simple knot iff $p/q \in I(\gamma)$.
\end{enumerate}
\end{thm}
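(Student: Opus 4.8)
The plan is to reduce the three statements to Proposition~\ref{prop:coherent} together with Theorem~\ref{thm:main} and a careful analysis of the geometry of the sweep-out of line segments from $(0,0)$ to the line $y=\omega$. The main point to establish first is the following dichotomy: since $\gamma$ is a strict 1-bridge braid, $p/q$ is \emph{not} an endpoint of $I(\gamma)$ (those endpoints have integral lattice points on the line $y=\omega$, whereas $\beta_0$ has slope $p/q$ and meets that line at the non-lattice point $(q\omega/p,\omega)$ only if $p\nmid q\omega$, i.e.\ $p\nmid q$, which holds since $\gcd(p,q)=1$ and $p>q\ge 0$, unless $p=1$). So $p/q$ lies either in the interior of $I(\gamma)$ or outside $I(\gamma)$, and in $P^1(\bQ)$ the slope $0$ is always an endpoint of $I(\gamma)$. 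This is precisely the trichotomy that the three cases of the theorem detect.

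First I would prove (3). If $p/q \in I(\gamma)$, then $s(\gamma)$ and $p/q$ can be joined by a path of slopes through the interior of $I(\gamma)$, which the sweep-out realizes as an isotopy of $\gamma$ rel endpoints through line segments meeting no interior lattice point; pushing $\gamma$ all the way to slope $p/q$ makes it parallel to $\beta_0$, and after a small further isotopy the diagram $(\Sigma,\alpha,\beta_1,z,w)$ can be arranged so that $\alpha$ and $\beta$ meet only in same-sign intersections, i.e.\ $K$ is a simple knot. Conversely, if $K$ is a simple knot, then it admits a $(1,1)$ diagram with all intersections of one sign; comparing with the diagram $D$ constructed from $\gamma\cup\delta$ and tracking the winding number and the fact that the number of intersection points in a reduced diagram is determined by $K$, one sees that $\wb_2$ has only the single intersection point in the relevant $\spc$-structures forced by this, which forces $\gamma$ to be isotopic rel endpoints to $\beta_0$ through the sweep-out, hence $p/q \in I(\gamma)$. (I expect the cleanest way to run the converse is contrapositively: if $p/q\notin I(\gamma)$, the sweep-out from $s(\gamma)$ towards $p/q$ must cross an endpoint $s_\pm(\gamma)$ of $I(\gamma)$, and the lattice point appearing there produces an essential arc that survives into $\wb_2$, giving a pair of opposite-sign intersection points in $D$, so $D$ is not a simple-knot diagram; one then needs that \emph{no} reduced $(1,1)$ diagram of a non-simple $K$ is all-one-sign, which follows from the uniqueness properties of reduced $(1,1)$ diagrams alluded to after Theorem~\ref{thm:main}.)

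Next, (1) and (2). By Theorem~\ref{thm:main}, $K$ is a positive L-space knot iff $D$ is positive coherent, and Proposition~\ref{prop:coherent} tells us $D$ is coherent, positive when $s(\gamma)\in[0,p/q]$ and negative when $s(\gamma)\in[p/q,0]$. So the content of (1) is the equivalence $D$ \emph{positive} coherent $\iff s_-(\gamma)\in[0,p/q]$, and similarly for (2). Here I would argue as follows. When $p/q\notin I(\gamma)$: walking along $P^1(\bQ)$ from $0$ (an endpoint of $I(\gamma)$) into $I(\gamma)$, one encounters $I(\gamma)=[s_-(\gamma),s_+(\gamma)]$ and then exits; the position of $p/q$ relative to this interval, combined with the fact that $s(\gamma)\in I(\gamma)$, pins down whether $s(\gamma)\in[0,p/q]$ or $s(\gamma)\in[p/q,0]$, and one checks that $s(\gamma)\in[0,p/q]$ exactly when $s_-(\gamma)\in[0,p/q]$ (because the whole interval $I(\gamma)$ lies on one side, as $0$ is an endpoint and $p/q$ is outside). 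When $p/q\in I(\gamma)$, case (3) applies and $K$ is simple; a simple knot in a lens space has an L-space surgery (indeed simple knots are exactly the ones realizing the lens-space Berge--Gabai fillings), and one verifies directly from the all-one-sign diagram that it is simultaneously a positive and a negative L-space knot, so both $s_-(\gamma)\in[0,p/q]$ and $s_+(\gamma)\in[p/q,0]$ hold automatically since $p/q\in I(\gamma)$ forces $s_-(\gamma)\le p/q\le s_+(\gamma)$ with $0$ an endpoint. Assembling these cases gives the stated iff's.

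The hard part will be the converse direction of (3) — showing that a \emph{non}-strict, non-simple position of $\gamma$ cannot accidentally be a simple knot — which really requires knowing that the coherence/graphic type of \emph{every} reduced $(1,1)$ diagram of $K$ agrees (the remark after Theorem~\ref{thm:main}), upgraded to the statement that "all intersections one sign" is likewise a diagram-independent property of $K$. I would isolate this as a lemma: a $(1,1)$ knot admits an all-one-sign reduced $(1,1)$ diagram iff \emph{every} reduced $(1,1)$ diagram has $\mathrm{rk}\,\widehat{HFK}(K,s)=1$ for all $s$, i.e.\ iff $\widehat{HFK}(K)\cong\bZ^{|H_1(Y)|}$, which is a knot invariant. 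With that lemma in hand, statements (1)–(3) fall out of the slope combinatorics above. I also expect to need, for the final bookkeeping, the observation that a strict 1-bridge braid genuinely has $s_-(\gamma)<s_+(\gamma)$ (non-degenerate interval) and $\omega\ge 2$, so the Farey-neighbor description of $s_\pm(\gamma)$ applies and the endpoints are honest slopes distinct from $p/q$.
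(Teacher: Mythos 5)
Your reverse directions of (1) and (2) and the reverse direction of (3) match the paper's argument (Proposition \ref{prop:coherent} plus Theorem \ref{thm:main}, and isotoping $\gamma$ to slope $p/q$). But there are two genuine problems. First, your preliminary ``dichotomy'' is wrong: for a strict 1-bridge braid the endpoints $s_\pm(\gamma)$ are slopes whose segments pass through a lattice point in their \emph{interior}, not at height $\omega$, so $p/q$ can perfectly well equal $s_\pm(\gamma)$ --- for example $p/q = 3$ is an endpoint of $I(\gamma) = [5/2,3]$ for $K(7,4,2)$, and the inclusion into $L(3,1)$ is simple. Likewise $0$ is not in general an endpoint of $I(\gamma)$, since $I(\gamma) \subset (\omega/(m+1), \omega/m)$. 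These slips do not sink the final statement, but they invalidate the case analysis as written.

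Second, and more seriously, the forward directions of (1) and (2) --- and everything you build on them --- are not actually proved. Proposition \ref{prop:coherent} only gives \emph{sufficient} conditions for each sign of coherence, and a reduced diagram with no bigons is coherent of both signs; so knowing $s(\gamma) \in (0,p/q)$, hence that $D$ is positive coherent, does not by itself rule out $K$ being a negative L-space knot. One must show that when $s_+(\gamma) = r/s \in (0,p/q)$ some $\widehat{HFK}(K,s)$ is a positive chain of rank greater than one. This is exactly the step you defer as ``the hard part'': the lattice point $(s,r)$ on the boundary segment must be \emph{captured} --- the paper chooses $\wa$ separating $(s,r)$ from $(t,\omega)$ and $\wb_0$ of slope $p/q$ separating $(s,r)$ from $(0,0)$, so that $\wa$, $\wb_0$, and $\wg$ cobound a triangle containing $(s,r)$; the finger move then traps $(s,r)$ in a negative bigon and $(t,\omega)$ in a positive bigon, and these bigons survive the reduction to $\wb_2$ precisely because they contain lifts of the basepoints. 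Your sketch (``produces an essential arc that survives into $\wb_2$'') asserts the survival without this justification, and survival is the whole point, since bigons without basepoints are exactly the ones cancelled in reduction. Until that is supplied, (1) and (2) forward, and with them your route to (3) forward, remain open. Note also that the paper runs the logic the other way around: it deduces the forward direction of (3) \emph{from} those of (1) and (2), since a simple knot is both a positive and a negative L-space knot, which forces $p/q \in I(\gamma)$; your proposed $\widehat{HFK}$-rank lemma for (3) is plausible but still needs the same lattice-point input to conclude anything about non-simple positions.
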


\noindent
For example, the inclusion of $K(7,4,2)$ into $L(p,q)$ is a simple knot iff $p/q \in [5/2,3]$.

\begin{proof}[Proof of Theorem \ref{thm:1-bridgerefinement}]
The reverse directions of (1) and (2) follow from Proposition \ref{prop:coherent} and Theorem \ref{thm:main}.

For the forward directions, suppose first that $s_+(\gamma) \in (0,p/q)$.
Write $s_+(\gamma) = r/s$ in lowest terms, $s > 0$.
As before, let $\wg$ denote the lift of $\gamma$ with one endpoint at $(0,0)$ and the other at $(t,\omega) \in \bR \times \bZ$.
Note that $\omega > |r|$.
Let $\wa \subset \bR^2 \smallsetminus \bZ^2$ denote a horizontal line that separates $(s,r)$ from $(t,\omega)$, and let $\wb_0 \subset \bR^2 \smallsetminus \bZ^2$ denote a line of slope $p/q$ that separates $(s,r)$ from $(0,0)$.
Then $\wa$, $\wb_0$, and $\wg$ intersect in pairs, and since $r/s \in (0,p/q)$, they cobound a triangle containing $(s,r)$ in its interior.
See Figure \ref{fig:latticepoint}(a).

Follow the procedure for producing a coherent diagram $D$ for $K \subset Y$ using the curves $\alpha = \pi(\wa), \beta_0 = \pi(\wb_0) \subset \Sigma$.
The isotopy from $\wb_0$ to $\wb_1$ captures $(s,r)$ in a negative bigon and $(t,\omega)$ in a positive bigon in $(\bR^2, \wa \cup \wb_1)$.
See Figure \ref{fig:latticepoint}(b).
These points remain in bigons of the respective types following the isotopy from $\wb_1$ to $\wb_2$.
It follows that $\widehat{HFK}(K,s)$ is a positive chain of rank greater than one for the $\spc$ structure $s$ corresponding to the pair of lifts $\wa,\wb_2$.
Thus, $K$ is not a negative L-space knot, which gives the forward direction of (2).
The forward direction of (1) follows the same line of reasoning.

Since a simple knot is both a positive and a negative L-space knot, the forward direction of (3) follows.
Finally, taking a 1-bridge presentation of $K$ in which $\gamma$ has slope $p/q$ results in a diagram $D$ of a simple knot, and the reverse direction of (3) follows.
\end{proof}

The following result recovers the isotopy classification of 1-bridge braids from Theorem \ref{thm:1-bridgerefinement}.  Compare \cite[Proposition 2.3]{Gabai1990}, which does so in terms of braid parameters.

\begin{cor}
\label{cor:invariants}
If $K = \gamma \cup \delta \subset S^1 \times D^2$ is a strict 1-bridge braid, then the slope interval $I(\gamma)$ is an invariant of the isotopy type of $K$.
Its isotopy type is determined by its slope interval and winding number, and $K$ is not isotopic to a torus knot or an exceptional cable thereof.
\end{cor}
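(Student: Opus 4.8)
The plan is to extract the invariance of $I(\gamma)$ directly from Theorem \ref{thm:1-bridgerefinement} and the observation preceding it that the boundary slope $p/q$ of the filling torus is determined by $Y = L(p,q)$. First I would argue that $I(\gamma)$ is determined by the collection of pairs $(Y, \varepsilon)$, where $Y$ ranges over all genus-one rational homology spheres obtained by solid-torus fillings on the exterior of $K \subset S^1\times D^2$ and $\varepsilon \in \{+,-,0,\text{none}\}$ records whether the image of $K$ is a positive L-space knot, negative L-space knot, simple knot, or none of these. By Theorem \ref{thm:1-bridgerefinement}, for the filling yielding $L(p,q)$ (with $p>q\ge 0$ coprime, $L(1,0)=S^3$), this sign is positive iff $s_-(\gamma)\in[0,p/q]$, negative iff $s_+(\gamma)\in[p/q,0]$, and simple iff $p/q\in I(\gamma)$. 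Since the homeomorphism type of the exterior together with its peripheral structure determines this entire data set, and the data set determines the set of slopes $p/q$ for which the knot is simple --- which is exactly $I(\gamma)\cap P^1(\bQ)$ --- we recover $I(\gamma)$ from the isotopy type of $K$ in the solid torus. (One must check that enough fillings are realized: every slope $p/q$ with $p>q\ge 0$ coprime arises as $\beta_0$, so the test "is $K\subset L(p,q)$ simple?" can be applied for every candidate slope, and the rational points of a closed interval in $P^1(\bQ)$ determine the interval.)

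Next I would establish that $\omega(K)$ is an isotopy invariant: this is already observed in the text, since $[K] = \omega\cdot[S^1\times x]\in H_1(S^1\times D^2;\bZ)$, so $\omega$ is read off from the homology class of $K$. For the converse --- that $(I(\gamma),\omega(K))$ determines the isotopy type --- I would recall the description given just before Theorem \ref{thm:1-bridgerefinement}: writing the lift of $\gamma$ from $(0,0)$ to $(t,\omega)$, the endpoints $s_\pm(\gamma)$ of $I(\gamma)$ are the two Farey neighbors of $t$ among fractions in $[\omega/(m+1),\omega/m]$ with denominator at most $|m|$, where $m<t<m+1$. Thus $\omega$ together with $I(\gamma)=[s_-(\gamma),s_+(\gamma)]$ pins down the integer $m$ (from $\omega/(m+1)$ and $\omega/m$ being the endpoints of the ambient Farey range, or more robustly from the combinatorics of the two neighbors) and then pins down $t$ as the unique real number in $(m,m+1)$ whose Farey neighbors at that level are $s_\pm(\gamma)$ --- equivalently, $\wg$ is the unique line segment from $(0,0)$ to $y=\omega$ whose slope interval is $I(\gamma)$. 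Since $\gamma$ is determined up to isotopy rel endpoints by the line segment $\wg$ (the geodesic representative), and $\delta$ is then forced as the bridge closing up $\gamma$ inside a meridian disk, the isotopy type of $K=\gamma\cup\delta$ in $S^1\times D^2$ is determined.

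Finally, for the last clause --- that $K$ is not isotopic to a torus knot or an exceptional cable thereof --- I would invoke the dichotomy set up in the paragraph before Theorem \ref{thm:1-bridgerefinement}: by construction, $K$ is a strict 1-bridge braid precisely when no line segment in the sweep-out from $(0,0)$ to $y=\omega$ through a slope in $I(\gamma)$ has a lattice point as its far endpoint; torus knots arise exactly when the endpoint $(q,\omega)$ is primitive, and exceptional cables of torus knots arise exactly when it is an imprimitive lattice point, and in both of those cases $K$ fails to be strict. Hence strictness is, by definition, mutually exclusive with being a torus knot or an exceptional cable thereof, and there is nothing further to prove. The main obstacle I anticipate is the bookkeeping in the middle paragraph: carefully justifying that the Farey-neighbor description genuinely recovers $t$ (and not merely $t$ up to some symmetry), i.e., that distinct strict 1-bridge braids with the same winding number have distinct slope intervals. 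This reduces to the elementary but slightly fiddly fact that a real number in a unit interval is determined by its pair of Farey neighbors at the appropriate level, which I would dispatch with a short continued-fraction argument.
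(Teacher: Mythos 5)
Your first clause follows the paper's route exactly: $I(\gamma)$ is recovered as the set of filling slopes $p/q$ for which the image of $K$ is a simple knot, via Theorem \ref{thm:1-bridgerefinement}(3), hence is an isotopy invariant. But the other two clauses contain genuine problems. In the middle step, the ``fact'' you plan to dispatch with a continued-fraction argument --- that a real number in a unit interval is determined by its pair of Farey neighbors at a given level --- is false: a pair of consecutive Farey fractions determines an open \emph{interval} of real numbers having them as neighbors, not a single point, and indeed the whole point of the slope interval is that every $t$ with $\omega/t$ in the interior of $I(\gamma)$ occurs for some geodesic representative. So $t$ is not pinned down and your argument as stated fails there. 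The repair is exactly what the paper does: $(I(\gamma),\omega)$ determines the sweep-out of segments from $(0,0)$ to the line $y=\omega$, and the sweep-out itself is an isotopy connecting all of these representatives, so uniqueness of $t$ is neither true nor needed.

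The more serious gap is the last clause. Strictness is defined by a property of one particular presentation of $K$ (whether the extremal segments of the sweep-out of the given geodesic arc $\widetilde{\gamma}$ end at lattice points); it is not, a priori, a property of the isotopy class. The statement to be proved is that no isotopy whatsoever --- including ones not visible in the sweep-out --- carries a strict 1-bridge braid to a torus knot or an exceptional cable, and this is precisely why the paper writes ``(and justify the terminology in Corollary \ref{cor:invariants})'' when introducing the term. Declaring the two classes ``mutually exclusive by definition'' begs the question. The paper's actual argument uses an isotopy invariant: a strict 1-bridge braid does not include as a simple knot in any lens space of order $|\omega(K)|$ or less (by clause (3) of Theorem \ref{thm:1-bridgerefinement} and the location of $I(\gamma)$), whereas every torus knot $T(q,\omega)\subset S^1\times D^2$ and every exceptional cable thereof includes as an unknot --- in particular as a simple knot --- in the lens space obtained by $(\omega/q)$-filling. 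You need some argument of this kind to close the gap.
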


\begin{proof}
The slope interval is characterized by Theorem \ref{thm:1-bridgerefinement} as the set of surgery slopes for which $K$ includes as a simple knot, so it is an isotopy invariant.
It and the winding number together determine a sweep-out of arcs which in turn specify the isotopy type of $K$.
If $K$ is a strict 1-bridge braid, then it does not include as a simple knot in any lens space of order $|\omega(K)|$ or less.
However, every torus knot $T(q,\omega) \subset S^1 \times D^2$ and exceptional cable thereof includes as an unknot in the lens space obtained by $(\omega/q)$-filling on the outer torus.
\end{proof}

\begin{figure}
\includegraphics[width=4in]{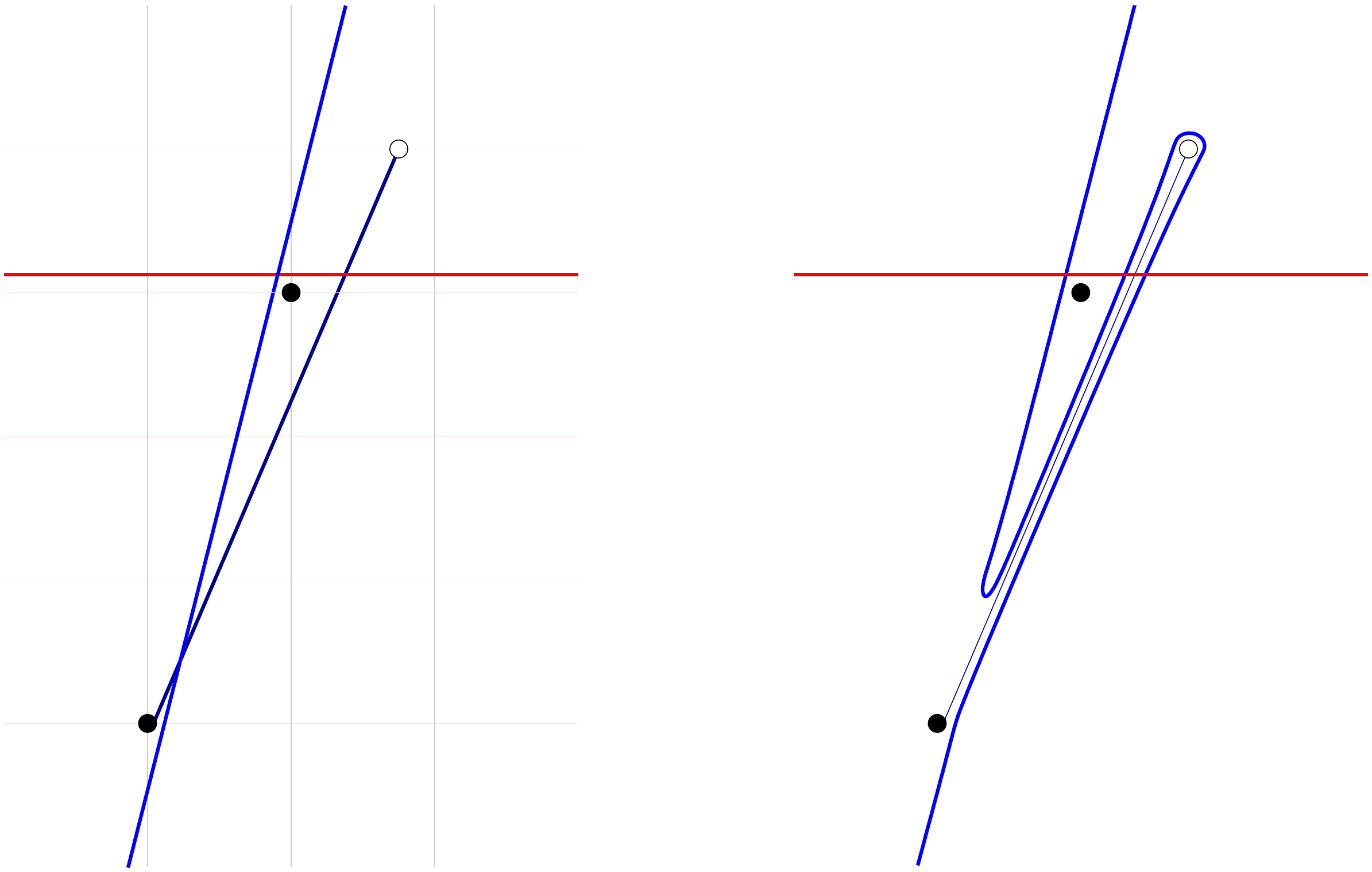}
\put(-310,170){(a)}
\put(-140,170){(b)}
\caption{Capturing a lattice point.}
\label{fig:latticepoint}
\end{figure}


\subsection{Solid torus fillings.}
\label{ss:torus}

Gabai proved that a knot in the solid torus with a non-trivial solid torus surgery is a 1-bridge braid in \cite{Gabai1990}, and Berge classified them in \cite{Berge1991}.
Berge found that, up to mirroring, $K(7,4,2)$ is the unique strict 1-bridge braid that admits more than one such surgery.
Its exterior is known as the Berge manifold.
Menasco and Zhang studied 1-bridge braids whose exteriors admit a solid torus filling on the {\em outer} torus in \cite{MenascoZhang2001}, and Wu classified them in \cite{wu2004}.

We indicate a line of approach towards Wu's result using Theorem~\ref{thm:1-bridgerefinement}.
Suppose that $K$ is a 1-bridge braid in $S^1 \times D^2$ and $(p/q)$-filling on its outer torus is a solid torus; equivalently, the inclusion of $K$ into $L(p,q)$ is a knot with solid torus exterior. 
A knot in $L(p,q)$ has a solid torus exterior if and only if it is simple and homologous to the oriented core of a Heegaard solid torus.
Since $[K]$ equals $\omega$ times a generator of $H_1(S^1 \times D^2;\bZ)$, the uniqueness of genus one Heegaard splittings of $L(p,q)$ implies that either
\begin{equation}
\label{eq:congruence}
(a) \quad \omega \equiv \pm 1 \pmod p \quad \mathrm{or} \quad (b) \quad \omega \cdot q \equiv \pm 1 \pmod p;
\end{equation}
the $2 \times 2$ possibilities correspond to the two Heegaard solid tori and the two orientations.
Therefore, Theorem \ref{thm:1-bridgerefinement} reduces the problem Wu solved to one about lattice points.
However, it appears to require considerable effort to extract Wu's result from it.
Nevertheless, we can quickly derive the following characterization of the Berge manifold:

\begin{prop}
\label{prop:unique}
Up to mirroring, the knot $K(7,4,2)$ is the unique strict 1-bridge braid whose exterior admits three distinct solid torus fillings on the outer torus.
\end{prop}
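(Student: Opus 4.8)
The plan is to leverage Theorem~\ref{thm:1-bridgerefinement}(3) to recast the problem entirely in terms of lattice-point geometry in the plane, and then to run an elementary combinatorial argument. By that theorem, for a strict 1-bridge braid $K = \gamma \cup \delta$ with winding number $\omega = \omega(K)$ and slope interval $I(\gamma) = [s_-(\gamma), s_+(\gamma)]$, a slope $p/q$ gives a solid torus filling on the outer torus if and only if the inclusion of $K$ into $L(p,q)$ is a knot with solid torus exterior, which by the discussion in Subsection~\ref{ss:torus} happens exactly when it is a simple knot homologous to a core, i.e.\ when $p/q \in I(\gamma)$ and one of the congruences in \eqref{eq:congruence} holds. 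So the first step is to fix the lift $\wg$ of $\gamma$ from $(0,0)$ to $(t,\omega)$ with $m < t < m+1$, recall that $s_\pm(\gamma)$ are consecutive terms in the relevant Farey sequence (fractions between $\omega/(m+1)$ and $\omega/m$ with denominator at most $|m|$), and observe that the slopes $p/q$ in the (closed) interval $I(\gamma)$ that can be written with $0 \le q < p$ and $p$ dividing one of $\omega \mp 1$ or $\omega q \mp 1$ are exactly the candidate filling slopes.

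The second step is to bound the number of lattice points available. Because $\gamma$ is a \emph{strict} 1-bridge braid, no line segment of slope $s_\pm(\gamma)$ from $(0,0)$ to the line $y = \omega$ passes through a lattice point, so the two endpoints of $I(\gamma)$ themselves are never filling slopes, and the entire closed sweep-out region between slopes $s_-(\gamma)$ and $s_+(\gamma)$ contains no interior lattice point other than those on the bounding rays — in fact none at all in the strict case. This forces $I(\gamma)$ to be a very short Farey interval, and the key inequality to extract is that $|m| \le 2$, or something comparably small: a strict 1-bridge braid whose slope interval is wide enough to contain three distinct slopes $p_1/q_1, p_2/q_2, p_3/q_3$ each satisfying one of the four congruences of \eqref{eq:congruence} modulo its own $p_i$ is extremely constrained. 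I would make this quantitative by noting that two distinct Farey-neighbors $a/b < c/d$ satisfy $bc - ad = 1$, so the denominators of fractions strictly between them are at least $b + d$; combined with the fact that each $p_i$ must be of size roughly $\omega$ (since $p_i \mid \omega \mp 1$ forces $p_i \le \omega + 1$, and the $\omega q_i \equiv \pm 1$ case is similar after a symmetry), there is only a bounded window of room.

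The third step is the finite search. Having reduced to $\omega$ and $m$ both small (a finite list), and knowing $s_\pm(\gamma)$ are determined by the Farey combinatorics, I would enumerate the finitely many strict 1-bridge braids in this range, compute $I(\gamma)$ for each, intersect with the set of slopes satisfying \eqref{eq:congruence}, and check which admit three distinct such slopes. The expectation, matching Berge's computation, is that the only solution up to mirroring is $K(7,4,2)$, for which $\omega = 7$, $m = 2$, $t \in (2,3)$, $I(\gamma) = [5/2, 3]$, and the three admissible filling slopes are the ones worked out in the proof of Proposition~\ref{prop:unique} via the bold lattice points in Figure~\ref{fig:724plane} and Figure~\ref{fig:latticepoint} — presumably $5/2$ and $3$ lying just outside with two or three rational slopes strictly between (e.g.\ $8/3$ and $11/4$, say) hitting the congruences modulo $8$, $11$, and one more. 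The mirroring ambiguity accounts for replacing $\gamma$ by its reflection, swapping the roles of $s_-$ and $s_+$ and the signs in \eqref{eq:congruence}.

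The main obstacle I anticipate is the bookkeeping in the second and third steps: the congruence conditions \eqref{eq:congruence} come in four flavors and interact with the Farey structure of $I(\gamma)$ in a slightly delicate way, so pinning down a clean, small bound on $\omega$ (rather than a messy case analysis) is where the real work lies. One must be careful that the ``three distinct fillings'' could in principle use different congruence branches for different slopes, and that the degenerate/unknot cases and the exceptional-cable-of-torus-knot cases (which are not strict 1-bridge braids, hence excluded by hypothesis, but border the strict ones) do not sneak back in. Once the bound is in hand, the residual search is routine and short, and the appeal to Gabai's and Berge's framing of the Berge manifold closes the argument.
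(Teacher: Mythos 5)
Your first step---translating the problem via Theorem~\ref{thm:1-bridgerefinement} and the congruences \eqref{eq:congruence} into a question about lattice points and the slope interval---matches the paper's setup. But the core of your argument has a genuine gap, and one of your intermediate claims is false. You assert that ``the two endpoints of $I(\gamma)$ themselves are never filling slopes.'' This is wrong: for $K(7,4,2)$ the three solid torus filling slopes are exactly the two endpoints $5/2$ and $3$ of $I(\gamma)=[5/2,3]$ together with their mediant $8/3$. (Strictness only says the extreme segments do not end at a lattice point at height $\omega$; the bounding rays still contain the lattice points $(c,a)$ and $(d,b)$ at lower height, and these are what realize the congruence \eqref{eq:congruence}(b).) The actual mechanism, which your sketch never reaches, is the opposite of your claim: an empty-triangle argument shows that any filling slope satisfying \eqref{eq:congruence}(b) must \emph{be} an endpoint of $I(\gamma)$, and any filling slope with $p>\omega$ must satisfy \eqref{eq:congruence}(a) with $p=\omega+1$ and must equal the mediant $(a+b)/(c+d)=(\omega+1)/(m+1)$. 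That is what pins the three slopes down to $a/c$, $b/d$, and their mediant.

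The second gap is that your proposed reduction to a finite search is not actually established. The inequality $p\le\omega+1$ only applies to branch \eqref{eq:congruence}(a); for branch (b) the modulus $p$ is not a priori bounded by $\omega$ (it is only bounded \emph{after} one proves the endpoint statement above), and in any case bounding the individual $p_i$ does not bound $\omega$ or $m$, so no finite list of knots emerges from what you have written. The paper avoids enumeration entirely: after showing that \eqref{eq:congruence}(b) can hold for at most one endpoint and \eqref{eq:congruence}(a) for at most one (via the lattice point $(m,\omega-1)$), it normalizes by mirroring and solves two $2\times 2$ determinant equations, $a-2c=1$ and $d-1=1$, which force $d=2$, $c=1$, $a=3$, $b=5$, $\omega=7$, $m=2$ exactly. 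To repair your proof you would need either to supply the empty-triangle/mediant analysis or to prove an honest a priori bound on $(\omega,m)$ and then actually perform the enumeration; as it stands, neither is done.
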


\noindent
Wu points out that this result follows from the work of Berge and Gabai, which is an amusing exercise \cite[\S 4(1)]{wu2004}.
By contrast, we deduce Proposition \ref{prop:unique} from Theorem \ref{thm:1-bridgerefinement} and the uniqueness of genus one Heegaard splittings of lens spaces.

Given linearly independent $v, w \in \bZ^2$, let $\Delta(v,w)$ denote the triangle with vertices $0$, $v$, $w$.
It is {\em empty} if it contains no lattice points besides its vertices, i.e. $\{v,w\}$ is a basis of $\bZ^2$.

\begin{proof}
Suppose that $K \subset S^1 \times D^2$ is a strict 1-bridge braid with winding number $\omega$ and slope interval $[b/d,a/c] \subset (\omega/(m+1),\omega/m)$, $m \in \bZ$.
Let $X$ denote the exterior of $K$.
Suppose that $(p/q)$-Dehn filling on the outer torus of $X$ is a solid torus.
Thus, $p/q \in [b/d,a/c]$, and one of the congruences in \eqref{eq:congruence} holds.

If \eqref{eq:congruence}(b) holds, then there must exist $s \in \bZ$ so that $\Delta((s,\omega),(p,q))$ is empty.
If $p/q \in (b/d,a/c)$, then this triangle contains $(c,a)$ if $s \le m$ and $(d,b)$ if $s \ge m+1$. 
Therefore, we must have $p/q \in \{ a/c, b/d \}$.
Furthermore, $s=m$ if $p/q = a/c$ and $s=m+1$ if $p/q = b/d$.

If instead $p > \omega$, then it follows that \eqref{eq:congruence}(a) holds, and we obtain $\omega = p-1$.
Furthermore, $(q,\omega+1) = (q,p)$ is in the cone bounded by the rays from $(0,0)$ through $(d,b)$ and $(c,a)$.
Since $m+1 \le \omega$, it follows that $(q,\omega+1) = (m+1,\omega+1)$.
Moreover, $(m+1)/(\omega+1)$ is the {\em mediant} of $d/b$ and $c/a$, meaning that $a+b = \omega+1$ and $c+d = m +1$.

Hence, if $X$ has three distinct solid torus fillings, then they have slopes $a/c$, $b/d$, and $(a+b)/(c+d)$ (in conformity with the cyclic surgery theorem \cite{Culler1987CyclicSurgery}).
We assume this going forward.

Suppose that \eqref{eq:congruence}(b) holds for both $b/d$ and $a/c$.
It follows that all lattice points interior to $\Delta((m,\omega),(m+1,\omega))$ fall on the rays generated by $(d,b)$ and $(c,a)$.
In particular, $(m,\omega-1)$ is a multiple of exactly one of these two points, meaning that $\omega \equiv 1 \pmod e$ for a unique value $e \in \{ a,b \}$.
Thus, \eqref{eq:congruence}(a) holds for whichever of $b/d$ and $a/c$ has numerator different from $e$.

On the other hand, if \eqref{eq:congruence}(a) holds for some $p/q \in \{a/c,b/d\}$, then it must hold with the sign $+1$, since otherwise one of $a$ or $b$ divides the other, a contradiction.
Since $(m,\omega-1)$ is the unique lattice point in $\Delta((m,\omega),(m+1,\omega))$ with its $y$-coordinate, it follows that it is a multiple of $(q,p)$.
Thus, \eqref{eq:congruence}(a) holds for at most one of $a/c,b/d$.

In total, \eqref{eq:congruence}(a) holds for one of $a/c$, $b/d$ and \eqref{eq:congruence}(b) holds for the other.
Applying the linear map $(x,y) \mapsto (y-x,y)$ exchanges $K$ with its mirror (see \cite[\S 4]{wu2004}).
Thus, we may assume that  \eqref{eq:congruence}(a) holds for $a/c$ and \eqref{eq:congruence}(b) for $b/d$.
We have $k \cdot (c,a) = (m,\omega-1)$ for some $k \in \bZ$, $k > 0$.
Since $(m+1,\omega+1) = (c,a) + (d,b)$, we obtain $(m+1,\omega+1) = k \cdot (c,a) + (1,2)$, and so $(k-1) \cdot (c,a) + (1,2) = (d,b)$.
Hence
\[
1 = \left| \begin{matrix} d & b \\ c & a \end{matrix} \right| = \left| \begin{matrix} 1 & 2 \\ c & a \end{matrix} \right| = a-2c 
\quad \mathrm{and} \quad
1 = \left| \begin{matrix} m+1 & \omega \\ d & b \end{matrix} \right| = \left| \begin{matrix} m+1 & \omega+1 \\ d & b \end{matrix} \right| + d = -1+d.
\]
We deduce in turn that $d=2$, $k=2$, $c=1$, $a=3$, $b=5$, $\omega = 7$, and $m = 2$.
Since $(7/3, 5/2, 3/1, 7/2)$ is the Farey sequence of fractions between $\omega/(m+1)$ and $\omega/m$, the slope interval of $K$ is $[5/2,3/1]$, and it has winding number $7$.
See Figure \ref{fig:724plane}.
These invariants specify $K \simeq K(7,4,2)$.
\end{proof}


\bibliographystyle{amsalpha2}

\bibliography{Reference}

\end{document}